\documentclass{article}
\usepackage{amsmath, amsthm, amssymb}

%%%%%%%%%%% Macros %%%%%%%%%%%%%%%%%
\def\C{\mathbb C}  \def\N{\mathbb N} 
\def\R{\mathbb R}  \def\Z{\mathbb Z}  
\def\D{\mathbb D}

\def\F{\mathcal F}  
\def\G{\Gamma} 
\def\PSL{\operatorname{PSU}(1,1)} 
\def\Homeo{\operatorname{Homeo}_{+}(S^1)} 
\def\tHomeo{\widetilde{\operatorname{Homeo}}_{+}(S^1)} 
\def\hyp{{\rm hyp}}
\def\vol{\operatorname{vol}} 
\def\eu{\operatorname{eu}}
\def\hol{\operatorname{hol}}
\def\m{\mathfrak{m}}
\def\pr{\operatorname{pr}}

\numberwithin{equation}{section}

\title{Harmonic measures and rigidity for surface group actions on the circle}
\author{
  Adachi, Masanori\\
  \texttt{adachi.masanori@shizuoka.ac.jp}
  \and
  Matsuda, Yoshifumi\\
  \texttt{ymatsuda@math.aoyama.ac.jp}
  \and
  Nozawa, Hiraku\\
  \texttt{hnozawa@fc.ritsumei.ac.jp}
}

\date{\today}

\newtheorem{thm}{Theorem}[section]
\newtheorem{cor}[thm]{Corollary}
\newtheorem{lem}[thm]{Lemma}
\newtheorem{prop}[thm]{Proposition}
\theoremstyle{definition}
\newtheorem{defn}[thm]{Definition}
\theoremstyle{remark}
\newtheorem{rem}[thm]{Remark}
\newtheorem{claim}[thm]{Claim}

\begin{document}

\maketitle

\begin{abstract}
We study rigidity properties of actions of a torsion-free lattice of $\PSL$ on the circle $S^1$. We follow the approaches of Frankel and Thurston proposed in preprints via foliated harmonic measures on the suspension bundles. Our main results are a curvature estimate and a Gauss--Bonnet formula for the $S^1$ connection obtained by taking the average of the flat connection with respect to a harmonic measure. As consequences, we give a precise description of the harmonic measure on suspension foliations with maximal Euler number and an alternative proof of rigidity theorems of Matsumoto and Burger--Iozzi--Wienhard.
\end{abstract}

\section{Introduction}

Let $\Sigma$ be an orientable hyperbolic surface of finite type. The $\pi_1(\Sigma)$-actions on the circle $S^1$ have rigidity properties governed by the Euler class.
For closed $\Sigma$, the Euler class is bounded by the Milnor--Wood inequality \cite{Milnor,Wood} and Matsumoto \cite{Matsumoto1} proved that if the equality holds for a $\pi_1(\Sigma)$-action in the Milnor--Wood inequality, then the action is semiconjugate to a Fuchsian action. Burger--Iozzi--Wienhard \cite[Section 4.5]{BIW} generalized the Milnor--Wood inequality and Matsumoto's theorem to the case where $\Sigma$ has finite volume and cusps. They defined and used the following generalization of the Euler number: Let $\D$ be the Poincar\'e disk. For a torsion-free lattice $\G$ in $\PSL=\operatorname{Isom}_+(\D)$ and a homomorphism $\rho : \G \to \Homeo$, the Euler number $e(\rho) \in \R$ is defined based on the bounded cohomology. In the case where $\G$ is not uniform, they showed that it is expressed in terms of the translation number of any homomorphism lifting $\tilde\rho : \G \to \tHomeo$ of $\rho$ to the universal cover group $\tHomeo$ of $\Homeo$: we have 
\[
e(\rho) = -\sum_{i=1}^{m} \tau (\tilde\rho (c_i)),
\]
where $\tau : \tHomeo \to \R$ is the translation number and $c_1, \dots, c_m$ are curves which go around each of cusps of the surface $\Sigma := \G \backslash \D$.

In this article, we study the rigidity of surface group actions on $S^1$ via foliated harmonic measures. We are going to follow an approach proposed in the unpublished article by Frankel \cite{Frankel} and the unfinished paper by Thurston \cite{Thurston}  (cf.\ Calegari \cite[Example 4.6]{Calegari}). During this course, we will address some missing details in these previous works concerning regularity issues (see Lemmas \ref{lem:bilipschitz} and \ref{lem:curvature}). 
We will consider a specifically chosen $S^1$-connection obtained by taking the average of the flat connection on the suspension bundle with respect to a harmonic measure. Our main results concern with this connection: a curvature estimate, which we prove following the master thesis of the first author \cite{Adachi1}, and a Gauss--Bonnet formula (Theorem \ref{thm:GB}). These results led us to give a precise description of the harmonic measure with maximal Euler number (Theorem \ref{thm:Poi}) in addition to an alternative proof of the results of Matsumoto and Burger--Iozzi--Wienhard (Corollary \ref{cor:mat}).

To state our main result, we briefly explain a construction due to Thurston \cite{Thurston} of the specifically chosen $S^1$-connection on the suspension bundle of a given action $\rho : \G \to \Homeo$.
Consider the suspension bundle $\pi \colon \Sigma \times_{\rho} S^1 \to \Sigma$ of $\rho$ equipped with the horizontal foliation $\F$.
Let $\mu$ be a harmonic measure on $\F$, which exists from the result of Alvarez \cite{Alvarez}. We normalize and disintegrate $\mu$ along the fibers of $\pi$ to obtain a family of measures $\{ \mu_z \}_{z \in \Sigma}$ on the fibers with total measure $2\pi$. 
When $\rho(\Gamma)$ has no finite orbit in $S^1$, measures $\mu_z$'s are non-atomic.
Moreover, we may assume that $\mu_z$'s have full support on the fibers by collapsing the complement of the support of $\mu$ by a semiconjugacy. 
We may regard $\Sigma \times_{\rho} S^1$ as a principal $S^1$-bundle whose principal $S^1$-action $\{\phi_{t}\}$ preserves
$\mu_z$ on each fiber. 
Let $\omega$ be the flat connection form defining $\F$. 
By taking the average of $\omega$ under the principal $S^1$-action, we obtain an $S^1$-connection of $\Sigma \times_{\rho} S^1$, 
\[
\bar\omega := \frac{1}{2\pi}\int_0^{2\pi} \phi^*_t\omega\, dt,
\]
which we call the $S^1$-\textit{connection form associated with} $\mu$. It should be noted that, after collapsing the complement of the support of $\mu$, the foliation $\F$ is only transversely Lipschitz and $\omega$ is continuous (see Section \ref{subsect:smooth-str}). Then $\bar{\omega}$ is only continuous in general and its curvature is considered as in the sense of Definition \ref{def:conconn}.

Now we state our main result.
\begin{thm}\label{thm:GB}
Let $\G$ be a torsion-free lattice in $\PSL$ and $\rho : \G \to \Homeo$ a homomorphism. Assume that $\rho (\Gamma)$ has no finite orbit in $S^1$. For a harmonic measure $\mu$ on the suspension foliation of $\rho$, let $\bar\omega$ be the $S^1$-connection form associated with $\mu$. 
Then, $\bar\omega$ has curvature of the form $K \vol$ such that $\lvert K(z) \rvert \leq 1$ a.e.\ $z$, and we have
\[
e(\rho) = \frac{1}{2\pi} \int_{\Sigma} K(z) \vol(dz),
\]
where $\vol$ denotes the hyperbolic volume form on $\Sigma := \G \backslash \D$.
\end{thm}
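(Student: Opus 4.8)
The plan is to identify the curvature of $\bar\omega$ as a pointwise quantity on $\Sigma$, bound it using the explicit structure of the harmonic measure disintegration, and then recognize the integral $\frac{1}{2\pi}\int_\Sigma K\vol$ as a de Rham–style representative of the real Euler number. Because $\bar\omega$ is obtained by averaging the flat connection $\omega$ over the principal $S^1$-action preserving $\mu_z$, its curvature $d\bar\omega$ is a well-defined (continuous, hence $L^\infty$) $2$-form pulled back from $\Sigma$; write it as $K\vol$. The first task is a local computation: fix a trivializing chart for $\Sigma\times_\rho S^1$, so $\omega = d\theta + \eta$ with $\eta$ a pullback of a $1$-form on $\Sigma$ valued in vector fields on $S^1$ (the holonomy cocycle). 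Averaging against the measure $\mu_z$ of total mass $2\pi$ replaces $\eta$ by its $\mu_z$-average, and the key point, following Thurston's computation, is that the curvature of the averaged connection is governed by the derivative cocycle of $\rho$ against $\mu_z$. Concretely, using that $\mu_z$ has full support and is non-atomic, one parametrizes the fiber by the "$\mu_z$-angle" coordinate $\psi_z$, in which $\{\phi_t\}$ is rotation and the $\rho$-action becomes a circle homeomorphism; then $\bar\omega = d\psi + \text{(average of the normalized cocycle)}$ and $K\vol = d$ of that averaged $1$-form.

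The curvature bound $|K(z)|\le 1$ a.e.\ is where the harmonicity of $\mu$ enters and should be the main obstacle. The idea is that harmonicity forces the fiberwise density of $\mu$, transported by the flat connection, to satisfy a leafwise heat-equation / Laplace-type equality, which pins down the first-order behavior of the holonomy cocycle in the $\mu_z$-coordinates. Passing to the universal cover $\D$ and using the $\Gamma$-equivariant development, one compares $\psi_z$ with the "Poisson–boundary angle" map $\D\to S^1$; the curvature of $\bar\omega$ then measures the failure of this map to be a local isometry onto the round circle. The estimate $|K|\le 1$ says this failure never beats the curvature $-1$ of $\D$, and it should follow from a Schwarz-type / subharmonicity argument for the leafwise harmonic density, invoking Lemmas \ref{lem:bilipschitz} and \ref{lem:curvature} to justify that the relevant quantities are defined a.e.\ and the averaging interchanges with differentiation. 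I would isolate this as the one genuinely delicate step, since it combines the regularity deficiencies (only Lipschitz, only continuous $\omega$) with the PDE characterizing harmonic measures.

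Finally, to get $e(\rho) = \frac{1}{2\pi}\int_\Sigma K\vol$, I would argue that $\bar\omega$ and $\omega$, both being connections on the same topological $S^1$-bundle $\Sigma\times_\rho S^1$, have the same Euler number, and that for a continuous $S^1$-connection the real Euler number is computed by $\frac{1}{2\pi}$ times the integral of the curvature against the fundamental class — this is the content of a Gauss–Bonnet / Chern–Weil statement valid in the continuous category (Definition \ref{def:conconn}). In the closed case this is classical; for $\Gamma$ a non-uniform lattice one must check the integral converges (which follows from $|K|\le 1$ and $\vol(\Sigma)<\infty$) and that the cusp contributions match the translation-number formula of Burger–Iozzi–Wienhard recalled in the introduction. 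The cleanest route is to restrict to a compact core $\Sigma_0\subset\Sigma$ with geodesic-boundary cusps removed, apply the compact Gauss–Bonnet formula with boundary to $\bar\omega|_{\Sigma_0}$, and show the boundary terms converge to $-\sum_i\tau(\tilde\rho(c_i))$ as $\Sigma_0\nearrow\Sigma$, using that near a cusp the holonomy is (semiconjugate to) a parabolic and its $\mu_z$-averaged connection has boundary holonomy with the prescribed translation number. Assembling these pieces yields the stated identity.
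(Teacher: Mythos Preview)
Your overall architecture matches the paper: reparametrize fibers by the cumulative $\mu_z$-coordinate, compute $K$ locally, bound it, then exhaust $\Sigma$ by compact cores and match boundary holonomy to translation numbers. But the heart of the theorem is the bound $|K|\le 1$, and there you have a genuine gap.

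The paper's argument is not a Schwarz lemma or subharmonicity estimate. It is the combination of two specific inequalities. First, after passing to the $\theta$-coordinate in which the principal $S^1$-action is rotation, the leaves become graphs of $\varphi(z,t)=\int_0^t h(z,s)\,ds$, and one writes $\omega_j(z,\theta)=\partial_{x_j}\varphi(z,\tau(z,\theta))$. The curvature $K(z)$ is then (up to the factor $(1-|z|^2)^2/4$) the signed area enclosed by the closed planar curve $\theta\mapsto(\omega_1(z,\theta),\omega_2(z,\theta))$ in $\R^2$. The \emph{isoperimetric inequality} bounds that area by $\frac{1}{4\pi}$ times the square of the length. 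Second, one computes that the speed of this curve is $|d\log h(z,\tau(z,\theta))|$, and \emph{Harnack's inequality} for positive harmonic functions on $\D$ gives $|d\log h|_{\hyp}\le 1$, so the hyperbolic length is at most $2\pi$. Together these give exactly $|K|\le 1$. Your proposal does not locate either ingredient; a Schwarz-type bound on the density itself will not produce the area--versus--perimeter structure that makes the estimate work, and the Poisson--boundary comparison you mention is what appears in the rigidity step (when equality forces the curve to be a round circle traversed at unit speed), not in the inequality itself.

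A second, smaller gap is in the cusp analysis. You say the boundary holonomy converges to $-\tau(\tilde\rho(c_i))$ ``using that near a cusp the holonomy is (semiconjugate to) a parabolic.'' But $\rho(c_i)$ is an arbitrary element of $\Homeo$ and need not be parabolic. The paper instead expresses the boundary term as a double integral against $\mu'_{\gamma^s(0)}\otimes\mu'_{\gamma^s(0)}$, uses Harnack to show these fiber measures along a shrinking horocircle are nearly $\rho'(c_i)$-invariant, extracts a weak limit $\nu$ that \emph{is} $\tilde\rho'(c_i)$-invariant, and then identifies the limit with the $\nu$-translation number, which equals $\tau(\tilde\rho'(c_i))$. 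This is more than a routine limit and does not follow from your sketch.
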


In the case where $\G$ is a uniform lattice, the Gauss--Bonnet formula is classical and is not new. The main point of this result is the estimate of the curvature and the behavior of the connection $\bar\omega$ near the cusps of $\Sigma$, which is proved based on Harnack's inequality and the isoperimetric inequality.  We remark that there is a similar curvature estimate in \cite[\S 6]{BCG}. 

As a corollary, we obtain an alternative proof of the generalization of the Milnor--Wood inequality and Matsumoto's rigidity theorem to torsion-free lattices due to \cite{BIW}.

\begin{cor}\label{cor:mat}
For a torsion-free lattice $\G$ in $\PSL$ and a homomorphism $\rho : \G \to \Homeo$, we have
\begin{equation}\label{eq:MW}
    \lvert e(\rho) \rvert \leq -e(\G\backslash \D),
\end{equation}
where $e(\G\backslash \D)$ denotes the Euler characteristic of the hyperbolic surface $\G\backslash \D$.
Furthermore, we have $e(\rho) = e(\G\backslash \D)$ $($resp.\ $e(\rho) = -e(\G\backslash \D)$$)$ 
if and only if $\rho$ is semiconjugate to $\rho_0$ $($resp.\ $\overline{\rho}_{0}$$)$, where $\rho_{0}$ is the Fuchsian action of $\G$ induced from the inclusion $\G \to \PSL \to \Homeo$, and $\overline{\rho}_{0}$ is its complex conjugate. 
\end{cor}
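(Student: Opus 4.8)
The plan is to deduce the statement entirely from Theorem~\ref{thm:GB} and the Gauss--Bonnet theorem for the hyperbolic metric. I first dispose of the degenerate case. If $\rho(\G)$ has a finite orbit in $S^1$, then $\rho$ preserves a probability measure on $S^1$; hence every $\rho(c_i)$ preserves it, the translation number is a homomorphism on the group of lifts of measure-preserving homeomorphisms, and
\[
e(\rho)=-\sum_{i=1}^m\tau(\tilde\rho(c_i))=-\tau\bigl(\tilde\rho(c_1\cdots c_m)\bigr)=0,
\]
because $c_1\cdots c_m$ is a product of commutators in $\pi_1(\G\backslash\D)$ (in the uniform case one argues instead that the real bounded Euler class of a measure-preserving action vanishes). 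Since $\G\backslash\D$ is hyperbolic of finite type, $-e(\G\backslash\D)>0$, so \eqref{eq:MW} holds strictly, and the equality statement is vacuous because an action semiconjugate to a Fuchsian action has no finite orbit (a finite orbit would project to a finite invariant set of the minimal action $\rho_0$). From now on assume $\rho(\G)$ has no finite orbit, fix a harmonic measure $\mu$, and let $\bar\omega$ with curvature $K\vol$ be as in Theorem~\ref{thm:GB}.

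For \eqref{eq:MW}: by Gauss--Bonnet $\int_\Sigma\vol=-2\pi\,e(\G\backslash\D)$, so since $|K|\le 1$ a.e.,
\[
|e(\rho)|=\frac{1}{2\pi}\left\lvert\int_\Sigma K\,\vol\right\rvert\le\frac{1}{2\pi}\int_\Sigma|K|\,\vol\le\frac{1}{2\pi}\int_\Sigma\vol=-e(\G\backslash\D).
\]
For the equality case it suffices to treat $e(\rho)=-e(\G\backslash\D)$, the case $e(\rho)=e(\G\backslash\D)$ following by conjugating $\rho$ with an orientation-reversing homeomorphism of $S^1$ (which negates $e(\rho)$ and, up to conjugacy, interchanges $\rho_0$ with $\overline{\rho}_0$). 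If $e(\rho)=-e(\G\backslash\D)$ then equality holds throughout the displayed chain, so $\int_\Sigma(1-K)\,\vol=0$ with $1-K\ge 0$ a.e., whence $K\equiv 1$ a.e.; equivalently, $\bar\omega$ is a genuine principal $S^1$-connection whose curvature $2$-form is the hyperbolic area form $\vol$.

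It remains to recover the action from this extremal connection, which is the heart of the proof. After collapsing the complement of the support of $\mu$ by a semiconjugacy, I may replace $\rho$ by the semiconjugate model $\rho'$ whose fiberwise measures $\mu_z$ are non-atomic with full support, and it suffices to show $\rho'$ is conjugate to $\overline{\rho}_0$. Pulled back to the universal cover, $\bar\omega$ is a principal $S^1$-connection on $\D\times S^1$ whose curvature is the hyperbolic area form of $\D$. The Levi--Civita connection of $T^1\D\to\D$, with fiber orientation chosen so that its curvature has the same sign, has the same curvature, and the associated $\Gamma$-action on $T^1\D$ is the suspension of $\overline{\rho}_0$ (under the identification of a unit tangent vector with the boundary point toward which it points, read with the reversed fiber orientation). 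Two principal $S^1$-connections on the contractible base $\D$ with equal curvature differ by an exact $1$-form, hence are gauge equivalent; and because the only connection-preserving bundle automorphisms of the Levi--Civita connection covering a prescribed isometry of $\D$ are given by the derivative of that isometry, the gauge transformation must be $\Gamma$-equivariant.

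To upgrade this equivariant bundle isomorphism to an actual conjugacy between $\rho'$ and $\overline{\rho}_0$, one needs in addition that it carries the horizontal foliation $\F$ to the Fuchsian foliation --- equivalently, that the disintegrated measure $\mu_z$ equals the visual (Poisson) measure of the hyperbolic structure on each fiber. This is exactly the rigidity hidden in the curvature estimate underlying Theorem~\ref{thm:GB} and made explicit in Theorem~\ref{thm:Poi}; I expect \emph{this} passage --- promoting coincidence of the \emph{averaged} connections to coincidence of the harmonic measure with the Poisson measure, in the face of the information lost in averaging --- to be the main obstacle. Granting it, $\rho'$ is conjugate to $\overline{\rho}_0$, so $\rho$ is semiconjugate to $\overline{\rho}_0$. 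The reverse implication is soft: the Euler number is a semiconjugacy invariant, so $e(\rho)=e(\overline{\rho}_0)$ whenever $\rho$ is semiconjugate to $\overline{\rho}_0$, and $e(\overline{\rho}_0)=-e(\G\backslash\D)$ either by the classical identification of the suspension of the Fuchsian boundary action with $T^1(\G\backslash\D)$, or by applying Theorem~\ref{thm:GB} to $\overline{\rho}_0$ together with its Poisson harmonic measure, for which $K\equiv 1$ results from a direct computation of the curvature of the Levi--Civita connection.
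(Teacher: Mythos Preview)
Your derivation of the inequality \eqref{eq:MW} from Theorem~\ref{thm:GB} is correct and is exactly what the paper does. Your handling of the finite-orbit case is a bit more explicit than the paper's (which simply notes that $e(\rho)\neq 0$ implies no finite orbit), but it is fine.

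For the rigidity direction, however, there is a genuine gap, and you have correctly located it yourself. The gauge-equivalence argument you sketch would at best conjugate the two \emph{averaged} $S^1$-connections; it cannot by itself recover the flat foliation $\F$ from $\bar\omega$, since averaging destroys exactly the information you need. You then invoke Theorem~\ref{thm:Poi} to close the gap, but in the paper Theorem~\ref{thm:Poi} and the rigidity half of Corollary~\ref{cor:mat} are proved \emph{simultaneously} by the same computation, so citing one to prove the other is circular. Moreover, once Theorem~\ref{thm:Poi} is available, the $(\rho,\rho_0)$-equivariant map $\m$ in its statement \emph{is} the semiconjugacy, so the gauge-theoretic detour through $T^1\D$ contributes nothing.

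The paper's actual route is to go back into the proof of the bound $|K|\le 1$ (Claim~\ref{inequality}) and analyze the equality case. That bound came from two inequalities applied pointwise: the isoperimetric inequality for the closed curve $\theta\mapsto(\omega_1(z,\theta),\omega_2(z,\theta))$, and Harnack's inequality $|d\log h|_{\hyp}\le 1$ for the speed of that curve. When $K(z_0)=-1$ (the paper treats the case $e(\rho)=e(\Sigma)$, so the sign is opposite to yours), equality in the isoperimetric inequality forces the curve to be a positively oriented round circle of the correct radius, traversed at constant speed; equality in Harnack then triggers the rigidity clause of Lemma~\ref{lem:harnack}(3), which identifies $h(\cdot,t)$ explicitly as a Poisson kernel $z\mapsto(1-|z|^2)/|m(e^{it})-z|^2$. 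The map $m\colon S^1\to S^1$ so produced is shown to be an orientation-preserving homeomorphism and $(\rho',\rho_0)$-equivariant directly from the $\Gamma$-invariance of the harmonic measure (Claim~\ref{claim:equiv}). Composing with the collapsing map $\psi$ from Step~1 gives the semiconjugacy $\m=m\circ\psi$ and proves Theorem~\ref{thm:Poi} at the same time. This is the missing mechanism: the passage from $K\equiv\pm1$ to the Poisson form of $h$ goes through the equality cases of the two inequalities that produced the curvature bound, not through a comparison of connections.
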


In the case where $\G$ is uniform, $\G \backslash \D$ is a closed surface. In that case, Corollary \ref{cor:mat} is nothing but the Milnor--Wood inequality \cite{Milnor,Wood} and the rigidity theorem of Matsumoto \cite{Matsumoto1}. Note that Matsumoto \cite{Matsumoto3} recently gave an alternative concise proof of the theorem of Burger--Iozzi--Wienhard \cite{BIW} by using his rigidity theorem for closed surfaces. 

The key of the proof of the rigidity part of Corollary \ref{cor:mat} is to show a rigidity of harmonic measures in the equality case in the Milnor--Wood inequality \eqref{eq:MW}. It is well known that the pull back of a harmonic measure on the suspension bundle to $\D \times \R$ is of the form 
\begin{equation}\label{eq:disint0}
h(z,t) \vol(z) \nu (t) \quad (z \in \D, t \in \R),
\end{equation}
where $\vol$ is the leafwise volume form, $\nu$ is a Borel measure on $\R$ and $h$ is a positive leafwise harmonic function. Note that $h$ is determined only up to the multiplication of leafwise constant functions. We will show that, if we have the equality in \eqref{eq:MW}, then we can take $h$ closely related to the Poisson kernel as stated in the following result: 

\begin{thm}\label{thm:Poi}
Let $\G$ be a torsion-free lattice in $\PSL$ and $\rho : \G \to \Homeo$ a homomorphism. If we have $e(\rho) = e(\Sigma)$, then the pull back of every harmonic measure on the suspension bundle of $\rho$ to the universal cover $\D \times \R$ is of the form 
\[
\frac{1-\lvert z \rvert^2}{\lvert \m(e^{it})-z \rvert^2} \vol(z) \nu (t) \quad (z \in \D, t \in \R),
\]
where $\vol$ is the leafwise hyperbolic volume form, for a Borel measure $\nu$ on $\R$ and a continuous monotone mapping $\m : S^1 \to S^1$ of degree one which is $(\rho,\rho_0)$-equivariant, namely, $\m \circ \rho(\gamma) = \rho_0 (\gamma) \circ \m$ holds for $\forall \gamma \in \G$. 
\end{thm}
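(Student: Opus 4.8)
The plan is to promote the equality $e(\rho)=e(\Sigma)$ to the pointwise identity $K\equiv-1$ by Gauss--Bonnet, and then to read off the shape of the leafwise harmonic function $h$ from the equality case of the curvature estimate of Theorem~\ref{thm:GB}. First, the finite-volume hyperbolic surface $\Sigma=\G\backslash\D$ satisfies $\vol(\Sigma)=-2\pi\,e(\Sigma)$ (also in the cusped case), so combining with $e(\rho)=\frac{1}{2\pi}\int_{\Sigma}K(z)\,\vol(dz)$ from Theorem~\ref{thm:GB}, the hypothesis $e(\rho)=e(\Sigma)$ gives $\int_{\Sigma}\bigl(K(z)+1\bigr)\,\vol(dz)=0$. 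Since $\lvert K\rvert\le1$ a.e.\ forces $K\ge-1$ a.e., we conclude $K(z)=-1$ for a.e.\ $z\in\Sigma$.

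The core is to revisit the proof of $\lvert K\rvert\le1$, which (as the paper notes) uses both Harnack's inequality and the isoperimetric inequality. Write the pullback of $\mu$ to $\D\times\R$ as $h(z,t)\,\vol(z)\,\nu(dt)$ as in \eqref{eq:disint0}, and note that after the collapsing $\nu$ has full support, since the conditionals $\mu_z=h(z,\cdot)\,\nu$ do. In the $\mu_z$-uniform fiber coordinate $\theta$ one has $\bar\omega=d\theta+\bar\eta$ with $\bar\eta$ a $1$-form on $\D$, and a computation identifies the curvature as $d\bar\eta=K(z)\,\vol$, where $K(z)$ is, up to a positive normalizing factor, the signed area enclosed by the planar loop $t\mapsto\bigl(\int_0^t\partial_xh(z,s)\,\nu(ds),\ \int_0^t\partial_yh(z,s)\,\nu(ds)\bigr)$ (its orientation, hence the sign of $K$, is what distinguishes the two extremes). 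The Euclidean length of this loop is $\int_{S^1}\lvert\nabla h(z,t)\rvert\,\nu(dt)$; the isoperimetric inequality bounds the enclosed area by the square of the length, and Harnack's inequality, i.e.\ $\lvert\nabla_{\hyp}\log h(\cdot,t)\rvert\le1$, bounds $\lvert\nabla h(z,t)\rvert$ by the hyperbolic conformal factor times $h(z,t)$, so the length is at most $2\pi$ times that factor and $\lvert K\rvert\le1$. Hence $K(z)=-1$ forces simultaneously that the loop is a round circle and that $\lvert\nabla_{\hyp}\log h(z,t)\rvert=1$ for $\nu$-a.e.\ $t$. As this holds for a.e.\ $z$ and $h(\cdot,t)$ is real-analytic in $z$, Fubini and analytic continuation give $\lvert\nabla_{\hyp}\log h(\cdot,t)\rvert\equiv1$ on $\D$ for $\nu$-a.e.\ $t$; the equality case of Harnack's inequality then yields $h(z,t)=c(t)\,\tfrac{1-\lvert z\rvert^{2}}{\lvert\m(e^{it})-z\rvert^{2}}$ for some $c(t)>0$ and $\m(e^{it})\in S^1$, uniquely determined by the Herglotz representation. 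Absorbing $c(t)$ into $\nu$ gives the stated form.

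It remains to verify the properties of $\m$. Equivariance follows from $\G$-invariance of $\mu$: for $\gamma\in\G$ one has $\vol(\gamma z)=\vol(z)$ and the Poisson kernel satisfies the M\"obius cocycle identity under $\rho_0(\gamma)\in\PSL$, so $\G$-invariance of $h(z,t)\,\vol(z)\,\nu(dt)$, up to the leafwise ambiguity, makes $\tfrac{1-\lvert z\rvert^{2}}{\lvert\m(\rho(\gamma)e^{it})-z\rvert^{2}}$ and $\tfrac{1-\lvert z\rvert^{2}}{\lvert\rho_0(\gamma)\m(e^{it})-z\rvert^{2}}$ proportional as functions of $z\in\D$; uniqueness of the Herglotz representation then gives $\m\circ\rho(\gamma)=\rho_0(\gamma)\circ\m$. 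Continuity, monotonicity and degree one of $\m$ follow from non-atomicity and full support of the conditionals $\mu_z$: since $\m(e^{it})$ is characterized by $h(\cdot,t)\propto\tfrac{1-\lvert z\rvert^{2}}{\lvert\m(e^{it})-z\rvert^{2}}$ and the densities $h(\cdot,t)$ must assemble into a single positive atomless fiber measure with $\nu$ of full support, the assignment $t\mapsto\m(e^{it})$ must respect the cyclic order on $S^1$, and a monotone map defined on a dense set extends uniquely to a continuous monotone circle map, necessarily of degree one because $K=-1$ selected the orientation-preserving extreme.

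The step I expect to be the main obstacle is the second one: producing the sharp \emph{pointwise} curvature identity and controlling its equality case rigorously under the low regularity the paper stresses --- $\bar\omega$ only continuous, its curvature understood weakly as in Definition~\ref{def:conconn}, $h$ pinned down only modulo leafwise constants, and the equality $K=-1$ available only for a.e.\ $z$, which must be propagated to all of $\D$ through the analyticity of leafwise harmonic functions before the Herglotz rigidity applies.
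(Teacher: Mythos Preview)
Your overall strategy matches the paper's: deduce $K\equiv-1$ a.e.\ from Gauss--Bonnet, then extract the Poisson-kernel form of $h$ from the equality cases of the isoperimetric and Harnack inequalities, and read off equivariance of $\m$ from $\G$-invariance of the measure. Two points where the paper proceeds differently and more cleanly are worth noting.

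First, the analytic continuation step is unnecessary. Lemma~\ref{lem:harnack}(3) shows that $\lvert d\log h\rvert_{\hyp}=1$ at a \emph{single} point $z_0$ already forces $h$ to be the Poisson kernel with pole at an explicit $m\in S^1$ determined by the gradient direction at $z_0$. So fixing one $z_0$ with $K(z_0)=-1$ suffices; there is no need to propagate Harnack equality to all of $\D$.

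Second, and more substantively, your argument for continuity and monotonicity of $\m$ is heuristic and does not exploit the full strength of the isoperimetric equality. The paper uses it as follows: at $z_0$, the loop $\theta\mapsto(\omega_1(z_0,\theta),\omega_2(z_0,\theta))$ being a round circle traversed at \emph{constant speed} forces the gradient direction of $h(\cdot,t)$ at $z_0$ to be exactly $e^{i(\varphi(z_0,t)+\alpha)}$ for some constant $\alpha$. Plugging this into Lemma~\ref{lem:harnack}(3) yields the \emph{explicit} formula
\[
m(e^{it})=\frac{e^{i(\varphi(z_0,t)+\alpha)}+z_0}{1+\overline{z_0}\,e^{i(\varphi(z_0,t)+\alpha)}},
\]
which is manifestly an orientation-preserving homeomorphism since $\varphi(z_0,\cdot)$ is one (Lemma~\ref{lem:bilipschitz}). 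Your appeal to ``respecting the cyclic order because the conditionals are atomless with full support'' does not obviously work: knowing only that each $h(\cdot,t)$ is a Poisson kernel with some pole $\m(e^{it})$ gives no a~priori relation between the poles at different $t$, and it is precisely the round-circle-at-constant-speed condition that ties them together monotonically.

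Finally, the paper keeps the collapsed representation $\rho'$ and the original $\rho$ separate throughout: the analysis above produces a homeomorphism $m$ that is $(\rho',\rho_0)$-equivariant, and then $\m=m\circ\psi$ (with $\psi$ the collapsing semiconjugacy from Section~\ref{subsect:semiconj}) is continuous, monotone of degree one, and $(\rho,\rho_0)$-equivariant. A short pushforward computation transfers the Poisson-kernel description from $\mu'$ back to the given $\mu$. Your writeup blurs this distinction, which is what accounts for $\m$ being merely monotone rather than a homeomorphism in the statement.
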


The map $\m$ gives a semiconjugacy from the  $\rho(\G)$-action to the Fuchsian $\G$-action. Matsumoto considered this map in \cite{Matsumoto2}. Consider a $\G$-action $\tau$ on a compact metric space $F$. Fix an ergodic harmonic measure on the suspension bundle. Then the lift to $\D \times F$ is of the form as in \eqref{eq:disint0}, where $\R$ is replaced with $F$. It is well known that, for each $t \in F$, the function $h(\cdot,t)$ is the integral of the Poisson kernel with respect to a Borel measure $\sigma_t$ on the ideal boundary of the leaf $\D \times \{t\}$. Matsumoto proved the following dichotomy: We have either the support of $\sigma_t$ is a point or the entire ideal boundary of the leaf for generic $t \in F$ simultaneously. In the former case, we have a $(\tau,\rho_0)$-equivariant measurable map $\m : F \to S^1; t \mapsto \operatorname{supp}(\sigma_t)$, which we call the \emph{Matsumoto map}. The map $\m$ in Theorem \ref{thm:Poi} is essentially this Matsumoto map.

\paragraph{Acknowledgments.}
We are grateful to Masayuki Asaoka for his comments, in particular, pointing out a serious gap in the proof of \eqref{eq:eachcusp} in a previous version of this article. We also thank Ken'ichi Yoshida for letting the authors know the publication of \cite{Thurston}. We thank the anonymous referee for careful reading and suggestions that highly improved the readability of this article.

M.A.\ is supported by JSPS KAKENHI Grant Numbers JP18K13422, JP19KK0347, JP21H00980, JP21K18579 and JP24K06776.
Y.M.\ is supported by JSPS KAKENHI Grant Number JP17K05260.
H.N.\ is supported by JSPS KAKENHI Grant Numbers JP20K03620 and JP24K06723.

\paragraph{Convention.}
Integrations and ``almost everywhere'' refer to the Lebesgue measure unless otherwise stated. 
The Dirac measure at a point $a$ is denoted by $\delta_a$.
We use the identification $S^1 = \R / 2\pi\Z$ throughout this paper.

\section{Preliminaries}\label{sec:prelim}

\subsection{Harmonic measures on the suspension foliation}

A harmonic measure on a foliated manifold is a measure on $M$ invariant under the leafwise heat flow, which was introduced by Garnett \cite{Garnett}. The advantage is that a nontrivial harmonic probability measure always exists for foliations on compact manifolds by Garnett's theorem, while transverse invariant measures may not exist. There are many applications to foliations, group actions and related topics, for instance, \cite{Yue,DK,Adachi2}.

Let $\G$ be a torsion-free lattice of $\PSL$, and $\Sigma :=\G\backslash \D$. For a homomorphism $\rho : \G \to \Homeo$, we can construct its suspension $M := \Sigma \times_\rho S^1 := \G\backslash (\D \times S^1) \to \Sigma$, where $\gamma \cdot (z,t) = (\gamma z,\rho(\gamma)t)$ for $\gamma \in \G, z\in \D, t\in S^1$. 
The suspension foliation $\F$ on $M$ is induced from the product foliation $\D \times S^1 = \sqcup_{w \in S^1}\D \times \{w\}$. 
Since $\G$ preserves the Poincar\'e metric 
\begin{equation}\label{eq:ghyp}
    (g_{\hyp})_{z} = \frac {4dz\,d{\bar{z}}}{(1-|z|^{2})^{2}} \quad (z \in \D),
\end{equation}
the leaves of $\F$ admit the natural Riemannian metric of constant curvature $-1$. 
We call it the leafwise hyperbolic metric.
Let us denote the leafwise Laplacian associated with the leafwise hyperbolic metric by $\Delta$. 

\begin{defn}
A Borel measure $\mu$ on $M$ is called \emph{harmonic} if 
\[
\int_M \Delta f(x) \mu(dx) = 0 
\]
for every compactly supported leafwise $C^2$ function $f$ on $M$ such that $\Delta f$ is continuous on $M$.
\end{defn}

If $M$ is compact, then a harmonic probability measure on $M$ exists by Garnett's theorem (see \cite{Garnett,Candel}). Now $M$ is not compact unless $\G$ is uniform. In this case, we need the following result of Alvarez \cite{Alvarez}:

\begin{thm}\label{thm:alvarez}
There exists a harmonic probability measure $\mu$ on $(M,\F)$.
\end{thm}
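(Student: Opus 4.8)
The plan is to realize a harmonic measure as a stationary measure of the leafwise heat semigroup and to produce one by a Krylov--Bogolyubov averaging argument, the only delicate point being a non-escape estimate at the cusps of $\Sigma$. Since the leaves of $(M,\F)$ are complete hyperbolic surfaces (quotients of $\D$), hence stochastically complete, the leafwise heat semigroup $\{D_t\}_{t\ge 0}$ is well defined and conservative: $D_tf(x)=\int_{L_x}p_t(x,y)f(y)\,\vol(dy)$, $\tfrac{d}{dt}D_tf=\Delta D_tf$, $D_t1=1$. Let $X_t$ denote the associated leafwise diffusion, so that the dual semigroup acts on probability measures by $(D_t^*\mu)(A)=\int_M\mathbb{P}_x[X_t\in A]\,\mu(dx)$. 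As in Garnett's work (see \cite{Garnett,Candel})---and this equivalence is local, hence valid on the noncompact $M$---a probability measure $\mu$ on $M$ is harmonic if and only if $D_t^*\mu=\mu$ for all $t\ge 0$. So it suffices to produce a $\{D_t^*\}$-fixed probability measure.

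Fix a point $x_0\in M$ lying over a $\vol$-generic point of $\Sigma$ and set $\mu_T:=\frac1T\int_0^TD_s^*\delta_{x_0}\,ds$. Each $\mu_T$ is a probability measure, and from the telescoping bound $\|D_t^*\mu_T-\mu_T\|\le 2t/T$ every weak-$*$ subsequential limit $\mu$ of $\mu_{T_k}$ with $T_k\to\infty$ satisfies $D_t^*\mu=\mu$ for all $t$; hence $\mu$ is harmonic \emph{provided $\mu$ is a probability measure}, i.e.\ provided the family $\{\mu_T\}_{T\ge1}$ is tight. On a compact $M$ this is automatic, so the whole content is to rule out escape of mass into the finitely many cusps.

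This is the heart of the matter, and the step I expect to be the main obstacle. Because $\pi\colon M\to\Sigma$ restricts on each leaf to a covering that is a local isometry, $\pi(X_t)$ is the heat diffusion on the finite-volume hyperbolic surface $\Sigma$, and for a cusp neighbourhood $C_\eta\subset\Sigma$ of depth $\eta$ (in the upper half-plane model of the cusp, $\{y\ge1/\eta\}$ modulo the parabolic) one has $\mu_T(\pi^{-1}C_\eta)=\frac1T\int_0^T\mathbb{P}_{x_0}[\pi(X_s)\in C_\eta]\,ds$. On functions of $y$ the hyperbolic Laplacian is $\Delta f=y^2f''$, so the radial diffusion $y(\pi(X_t))$ has scale function $s(y)=y$ and speed measure $y^{-2}\,dy$; hence $\mathbb{P}_y[\text{reach depth }N\text{ before leaving the cusp}]=\frac{y-1}{N-1}\to0$, the diffusion leaves each cusp almost surely, and the expected time it spends in $C_\eta$ during one visit to the cusp is $O(\eta)$. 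Combined with ergodicity of $\{D_t\}$ on $(\Sigma,\vol/\vol(\Sigma))$---which holds because $\Sigma=\G\backslash\D$ has the Liouville property, itself a consequence of the ergodicity of the $\G$-action on $S^1$ for a lattice $\G$---one gets $\frac1T\int_0^T\mathbb{P}_{x_0}[\pi(X_s)\in C_\eta]\,ds\to\vol(C_\eta)/\vol(\Sigma)=O(\eta)$ as $T\to\infty$, while for $T$ in any bounded range the same average is $O(\eta)$ by a short-time estimate (starting outside $C_\eta$, reaching depth $1/\eta$ within a fixed time has probability tending to $0$ as $\eta\to0$, e.g.\ by a maximal inequality for the supermartingale $\log y$). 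Letting $\eta\to0$ over each of the finitely many cusps gives $\sup_{T\ge1}\mu_T(\pi^{-1}C_\eta)\to0$, hence tightness, and Prokhorov's theorem yields the desired harmonic probability measure $\mu$. The subtlety is genuinely the uniformity in $T$: one needs the occupation bound for all $T$, not merely in the limit, which is why both the ergodic input (large $T$) and the diffusion estimate on the cusp (all time scales) are needed.

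An alternative route, closer to \cite{Alvarez}, is to discretize the leafwise heat diffusion by a Lyons--Sullivan-type sampling, thereby reducing the existence of a harmonic measure on $M$ to the existence of a stationary measure on $S^1$ for a suitable finitely supported random walk on $\G$; such a stationary measure exists by the Krylov--Bogolyubov argument applied to the compact space $S^1$, and its geometric realization is the sought harmonic measure. The same control of the cusp geometry of $\Sigma$ enters there, in checking that the discretization scheme has bounded distortion.
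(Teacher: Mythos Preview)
The paper does not give its own proof of this theorem: it is quoted from Alvarez \cite{Alvarez}, and the only indication of method is the one sentence ``Alvarez constructed a harmonic probability measure by using a stationary measure for the $\G$-action on $S^1$.'' Your \emph{alternative route}---discretize the leafwise diffusion \`a la Lyons--Sullivan, reduce to a stationary measure on the compact fibre $S^1$ for a random walk on $\G$, and then reassemble---is exactly the approach the paper is attributing to Alvarez. That route has the virtue that compactness is for free (the fibre is $S^1$), and the cusp geometry only enters in checking that the discretization has the required balanced/bounded-distortion properties.

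Your \emph{main route} via Krylov--Bogolyubov averaging of the leafwise heat semigroup is a genuinely different argument, and the key observation you isolate---that $\pi$ restricted to any leaf is a Riemannian covering, so $\pi(X_t)$ is Brownian motion on $\Sigma$ and hence tightness of $\{\mu_T\}$ on $M$ reduces to tightness of the occupation measures of Brownian motion on the finite-volume surface $\Sigma$---is the right one. Your cusp analysis ($y$-coordinate Markov with generator $y^2\partial_y^2$, scale $s(y)=y$, $\log y$ a supermartingale) is correct. The ``uniformity in $T$'' issue you flag is real but can be dispatched more cleanly than by splitting into large and small $T$: once you know $\pi_*\mu_T\to \vol/\vol(\Sigma)$ weakly (from ergodicity and positive recurrence of Brownian motion on $\Sigma$, itself a consequence of the cusp estimate you wrote), the map $T\mapsto \pi_*\mu_T$ extends continuously to $[1,\infty]$, its image is compact in the weak topology, and Prokhorov gives tightness of the whole family at once. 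So the plan is sound; what it buys over Alvarez's route is that it stays entirely on $M$ and avoids the Lyons--Sullivan machinery, at the cost of having to handle noncompactness directly.
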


Alvarez \cite{Alvarez} constructed a harmonic probability measure by using a stationary measure for the $\G$-action on $S^1$. 
By the disintegration formula \cite{Garnett}, the pull back of a harmonic measure $\mu$ to $\D \times \R$ is of the form 
\begin{equation}\label{eq:disint}
\tilde{\mu} = h(z,t) \vol(z) \nu (t),
\end{equation}
where $h(z,t)$ is a Borel measurable function whose restriction to $\D \times \{t\}$ is a positive harmonic function for $\nu$-a.e.\ $t$, $\vol$ is the leafwise volume form and $\nu$ is a Borel measure on $\R$. 

\subsection{Harnack's inequality}
We equip the unit disk $\D = \{ z = x_1 + i x_2 \in \C \mid \lvert z \rvert < 1\}$ with the Poincar\'e metric and denote by $d_{\hyp}$ the induced hyperbolic distance, and by $\lvert \cdot \rvert_{\hyp}$ the induced norm on differential forms. The following is a fundamental inequality for positive harmonic functions on $\D$, which we apply to leafwise harmonic functions associated to harmonic measures.

\begin{lem}[Harnack's inequality] 
\label{lem:harnack}
Let $h$ be a positive harmonic function on $\D$.
\begin{enumerate}
\item For any $z, w \in \D$, $e^{-d_{\hyp}(z, w)}h(w) \leq h(z) \leq e^{d_{\hyp}(z, w)} h(w)$.
\item $\lvert d \log h \rvert_{\hyp} \leq 1$ on $\D$. 
\item If $\lvert d\log h \rvert_{\hyp} = 1$ at $\alpha \in \D$, then we have
\[
h(z) = h(0) \frac{1-\lvert z \rvert^2}{\lvert m-z \rvert^2},
\]
where
\[
c := \frac{h_{x_1}(\alpha) + ih_{x_2}(\alpha)}{\lvert h_{x_1}(\alpha) + ih_{x_2}(\alpha)\rvert} \in S^1, \quad m := \frac{c+ \alpha}{1 + \overline{\alpha}c}\in S^1.
\]
\end{enumerate}
\end{lem}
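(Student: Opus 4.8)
The plan is to base everything on the Herglotz (Poisson) representation of a positive harmonic function on $\D$: there is a unique finite positive Borel measure $\mu$ on $S^1$ with
\[
h(z) = \int_{S^1} \frac{1-\lvert z \rvert^2}{\lvert \xi - z \rvert^2}\, \mu(d\xi),
\]
together with two soft facts, that $\PSL$ acts on $(\D,g_{\hyp})$ by isometries and that precomposition with an element of $\PSL$ preserves positive harmonicity. For (1) I would first reduce to $w=0$ by precomposing $h$ with the M\"obius transformation carrying $0$ to $w$; this changes neither side of the asserted inequality because $d_{\hyp}$ is $\PSL$-invariant. For $w=0$ one has $h(0)=\mu(S^1)$, and from $1-\lvert z\rvert\le\lvert\xi-z\rvert\le 1+\lvert z\rvert$ one gets the pointwise bounds $\frac{1-\lvert z\rvert}{1+\lvert z\rvert}\le\frac{1-\lvert z\rvert^2}{\lvert\xi-z\rvert^2}\le\frac{1+\lvert z\rvert}{1-\lvert z\rvert}$, which integrate to $\frac{1-\lvert z\rvert}{1+\lvert z\rvert}h(0)\le h(z)\le\frac{1+\lvert z\rvert}{1-\lvert z\rvert}h(0)$; since $d_{\hyp}(0,z)=\log\frac{1+\lvert z\rvert}{1-\lvert z\rvert}$, this is exactly (1). (Alternatively one may invoke the classical Harnack inequality on Euclidean disks $B(0,R)$ and let $R\uparrow 1$.)

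Part (2) is then immediate: rewriting (1) as $\lvert\log h(z)-\log h(w)\rvert\le d_{\hyp}(z,w)$ shows that $\log h$ — which is smooth since $h$ is positive and real-analytic — is $1$-Lipschitz for $d_{\hyp}$, and for a smooth function this is equivalent to $\lvert d\log h\rvert_{\hyp}\le 1$ everywhere.

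For the rigidity statement (3) I would again conjugate by a M\"obius transformation $\varphi\in\PSL$ with $\varphi(0)=\alpha$; since $\varphi$ is a hyperbolic isometry, $\tilde h:=h\circ\varphi$ is positive harmonic with $\lvert d\log\tilde h\rvert_{\hyp}(0)=1$. At the origin $g_{\hyp}=4\,(dx_1^2+dx_2^2)$, so $\lvert d\log\tilde h\rvert_{\hyp}(0)=\tfrac12\,\lvert\nabla\tilde h(0)\rvert/\tilde h(0)$ with $\nabla$ the Euclidean gradient, while differentiating the Poisson representation of $\tilde h$ at $z=0$ gives $\nabla\tilde h(0)=\int_{S^1}2\xi\,\tilde\mu(d\xi)$ (viewing $\xi\in S^1\subset\R^2$). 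Hence the hypothesis reads $\bigl\lvert\int_{S^1}\xi\,\tilde\mu(d\xi)\bigr\rvert=\tilde\mu(S^1)=\int_{S^1}\lvert\xi\rvert\,\tilde\mu(d\xi)$, i.e.\ equality in the triangle inequality for a vector-valued integral; since every $\xi$ has unit length this forces $\tilde\mu$ to be a point mass $\tilde\mu=\tilde h(0)\,\delta_c$ with $c=\nabla\tilde h(0)/\lvert\nabla\tilde h(0)\rvert$. Therefore $\tilde h(z)=\tilde h(0)\,\frac{1-\lvert z\rvert^2}{\lvert c-z\rvert^2}$. Undoing the conjugation then gives $h(z)=h(0)\,\frac{1-\lvert z\rvert^2}{\lvert m-z\rvert^2}$ for a suitable pole $m$: since $\varphi'(0)=1-\lvert\alpha\rvert^2>0$, the Euclidean gradient direction at $\alpha$ is unchanged, which identifies $c$ with $\frac{h_{x_1}(\alpha)+ih_{x_2}(\alpha)}{\lvert h_{x_1}(\alpha)+ih_{x_2}(\alpha)\rvert}$, and the transformation law $P(\varphi(w),\varphi(\eta))=P(w,\eta)\,j_{\varphi}(\eta)$ of the Poisson kernel (with $j_{\varphi}>0$ the boundary Jacobian) turns the pole $c$ of $\tilde h$ into the pole $m=\varphi(c)=\frac{c+\alpha}{1+\bar\alpha c}$ of $h$, the leftover positive constant being absorbed on evaluating at $z=0$.

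The genuine content is the rigidity in (3): harmonicity enters only through the existence of the Poisson representation, after which the rigidity is purely the equality case of the triangle inequality (strict convexity of the Euclidean norm on $\R^2$). The one step demanding care is the explicit identification of $c$ and $m$ at a general base point $\alpha$; I would settle this by the M\"obius conjugation above together with a one-line Wirtinger-derivative check that the pole of a point-mass Poisson kernel transforms under $\varphi$ as $c\mapsto\varphi(c)$. Parts (1) and (2) are standard and present no obstacle.
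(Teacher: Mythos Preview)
Your proposal is correct and follows essentially the same route as the paper: part (2) is deduced from (1) by passing to $\log h$, and part (3) is handled by differentiating the Poisson representation at the origin to force the boundary measure to be a Dirac mass, then conjugating by the M\"obius map $z\mapsto\frac{z+\alpha}{1+\bar\alpha z}$ for general $\alpha$. The only cosmetic differences are that the paper cites (1) as standard rather than proving it via the Poisson kernel bounds, and at $\alpha=0$ it first rotates so that $(\log h)_{x_1}(0)=2$, $(\log h)_{x_2}(0)=0$ and then reads off $\int_0^{2\pi}\cos t\,\mu(dt)=1$, whereas you phrase the same step as equality in the vector-valued triangle inequality; both arguments are the same in substance.
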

\begin{proof}
The first point is a standard fact (see, for instance, \cite{Ahlfors}). 
By taking the logarithm, we can deduce from the first point that 
\[
\lvert \log h(z) - \log h(w) \rvert \leq d_{\hyp}(z,w),
\]
hence, $\lvert d\log h \rvert_{\hyp} \leq 1$ holds everywhere on $\D$.

Let us show the third point. We use the Poisson formula for positive harmonic functions (see, for instance, \cite{Tsuji}), 
\begin{equation}\label{poisson}
h(z) = h(0) \int_{[0,2\pi)} \frac{1-\lvert z \rvert^2}{\lvert e^{it} - z \rvert^2} \mu(dt)
\end{equation}
where $\mu$ is a Borel measure on $\R$ with $\mu([0,2\pi)) = 1$ and invariant under $2\pi\Z$-translation.
Consider first the case when $\lvert d \log h \rvert_{\hyp} = 1$ holds at $\alpha = 0 \in \D$. By a rotation, we may assume $(\log h)_{x_1}(0) = 2$, $(\log h)_{x_2}(0) = 0$, $m = 1$. Differentiating \eqref{poisson} at $z = 0$, we have
\[
2 = (\log h)_{x_1}(0) = \int_{[0,2\pi)} \left.\left(\frac{\partial}{\partial x} \frac{1-\lvert z \rvert^2}{\lvert e^{it}-z \rvert^2}\right)\right\rvert_{z=0} {\mu(dt)} = \int_{[0,2\pi)} 2 \cos t\, {\mu(dt)}.
\]
Hence $1 \leq \cos t$ holds $\mu$-a.e.\ and 
$\operatorname{supp} \mu \subset 2\pi\Z$.
Therefore, $\mu = \sum_{n \in \Z} \delta_{2\pi n}$ and
\[
h(z) = {h(0)}\int_{[0,2\pi)} \frac{1-\lvert z \rvert^2}{\lvert e^{it}-z \rvert^2}\, \delta_0(dt) =  h(0) \frac{1-\lvert z \rvert^2}{\lvert 1-z \rvert^2}.
\]

Now let $\alpha \in \D$ arbitrary. 
Define $f_\alpha(z) = ({z + \alpha})/({1 + \overline{\alpha}z}) \in \operatorname{PSU}(1,1)$ and $g := h \circ f_\alpha$.
Then 
\[
g_{x_1}(0) = (1-\lvert \alpha \rvert^2)h_{x_1}(\alpha), \quad g_{x_2}(0) = (1- \lvert \alpha \rvert^2)h_{x_2}(\alpha),
\]
hence, $\lvert d \log g \rvert_{\hyp} = 1$ holds at $0 \in \D$. 
Applying the argument in previous paragraph to $g$, we obtain 
\[
g(z) = g(0)\frac{1-\lvert z \rvert^2}{\lvert c-z \rvert^2}, \quad 
c = \frac{h_{x_1}(\alpha) + i h_{x_2}(\alpha)}{\lvert h_{x_1}(\alpha) + i h_{x_2}(\alpha)\rvert},
\]
and it follows that
\[
h(z) = h(\alpha)\frac{1-\lvert f_\alpha^{-1}(z)\rvert^2}{\lvert c-f_\alpha^{-1}(z)\rvert^2} 
= 
h(\alpha)\frac{1-\lvert \alpha \rvert^2}{\lvert c+{\alpha}\rvert^2}. \frac{1-\lvert z \rvert^2}{\lvert \frac{c + \alpha}{1+\overline{\alpha}c} - z \rvert^2}.
\]
By letting $z = 0$, we see that 
$h(\alpha)\frac{1-\lvert\alpha\rvert^2}{\lvert c+{\alpha} \rvert^2} = h(0)$ and we complete the proof.
\end{proof}

\subsection{The Euler number for surface group actions on $S^1$}

Let $\G$ be a torsion-free lattice of $\PSL$, and $\rho : \G \to \Homeo$ a homomorphism. Let us recall the definition of the Euler number in \cite{BIW}. If $\G$ is not uniform, clearly we cannot adopt the classical definition by the pairing of the Euler class and the fundamental class of $\Sigma = \G\backslash \D$. 
Ghys \cite{Ghys} introduced the Euler class in the bounded cohomology. Burger--Iozzi--Wienhard \cite{BIW} defined the Euler number based on Ghys' idea by using the relative fundamental class in the bounded cohomology as follows:
Let $\Sigma'$ be the compact surface with boundary obtained by cutting off all cusps of $\Sigma$.
We can pull back the universal bounded Euler class $\eu \in H_{b}^{2}(\Homeo)$ by $\rho$ to have $\rho^*\eu \in H_{b}^{2}(\G)$, which we regard as an element of $H_{b}^{2}(\Sigma')$ via the so-called Gromov isomorphism $H^2_b(\G) \cong H^2_b(\Sigma')$. Since $H^1_b(\partial \Sigma')\cong H^2_b(\partial \Sigma')= 0$, by the relative exact sequence, we have an isomorphism
\[
f : H^2_b(\Sigma', \partial \Sigma') \to H^2_b(\Sigma').  
\]
The \emph{Euler number} $e(\rho) \in \R$ of $\rho$ is defined by
\[
e(\rho) = \langle f^{-1}\rho^*\eu, [\Sigma',\partial \Sigma'] \rangle,
\]
where $[\Sigma',\partial \Sigma']$ is the relative fundamental class of $(\Sigma',\partial \Sigma')$.

Like as the classical Euler number, by a result of \cite{BIW}, the Euler number $e(\rho)$ is expressed in terms of the suspension bundle $\pi \colon M = \Sigma' \times_{\rho} S^1 \to \Sigma'$ and the translation numbers of the action of the boundary loops of $\Sigma'$. Let us consider the case where $\partial \Sigma' \neq \emptyset$. Let $c_1, \ldots, c_m$ be the boundary loops of $\Sigma'$ whose orientations are induced from $\Sigma'$. Take a homomorphism lift $\tilde\rho : \Gamma \to 
\tHomeo$ of $\rho$, where 
\[
\tHomeo = \{ \, f \in \operatorname{Homeo}_+(\R) \mid f(x+2\pi) = f(x) + 2\pi, \forall x \in \R \, \}
\]
is the universal cover group of $\Homeo$. 
Since $\pi: M \to \Sigma'$ is trivial as an $S^1$-bundle, such lift exists. Let $\tau : \tHomeo \to \R$ be the translation number, which is defined by 
\[
\tau (f) = \frac{1}{2\pi} \lim_{n \to \infty} \frac{f^{n}(x) - x}{n},
\]
where $x$ is any point in $\R$ (see, e.g. \cite{Navas}). By \cite{BIW}, in the case where $\partial \Sigma' \neq \emptyset$, we have 
\[
e (\rho) = -\sum_{i=1}^{m} \tau(\tilde\rho (c_i)).
\]
Note that the lift $\tilde\rho$ determines a trivialization of the $S^1$-bundle $M\to \Sigma'$ up to homotopy, which is the image of the $0$-section of the suspension $\R$-bundle of $\tilde\rho$ under the projection $\Sigma' \times_{\tilde{\rho}} \R \to M$. 

Our Theorem \ref{thm:GB} states a Gauss--Bonnet formula that express this Euler number $e(\rho)$ in terms of a curvature integral with respect to a specifically chosen continuous connection.
To this end, let us recall the usual Gauss--Bonnet formula for surfaces with boundaries.
Let $\Sigma'$ be a smooth compact surface with smooth boundary, and $\pi: M \to \Sigma'$ a smooth principal $S^1$-bundle.
We will call a continuous $1$-form $\eta$ on $M$ a \emph{continuous $S^1$-connection form} on $M$ if $\eta$ is an $S^1$-invariant continuous section of $T^{*}M$ such that $\eta \left(\frac{\partial}{\partial \theta}\right)=1$ on $M$, where $e^{i\theta} \in S^1$ and $\frac{\partial}{\partial \theta}$ denotes the generator of the principal $S^1$-action.
For each piecewise smooth closed curve $\gamma$ in $\Sigma'$, $\hol_\eta(\gamma) \in S^1$ denotes the holonomy along $\gamma$ with respect to the parallel transport defined by the connection $\eta$.

\begin{defn}\label{def:conconn}
Let $\Omega$ be an integrable $2$-form on $\Sigma'$. 
For a (non-smooth) continuous $S^1$-connection form $\eta$ on $M$, we call  $\Omega$ the \emph{curvature} of $\eta$ if, for every simply-connected subset $U$ of $\Sigma'$ with piecewise smooth boundary $\partial U$, we have $\hol_\eta(\partial U) = \exp i \int_U \Omega$, where we regard as $\hol_\eta(\partial U) \in S^1$.
\end{defn}

When $\eta$ is smooth, it is well-known that its curvature exists as a smooth $2$-form $\Omega$ on $\Sigma'$ such that $\pi^*\Omega = d\eta$. 
In the step 4 of the proof of the main result (Theorem \ref{thm:GB}), we will use the following Gauss--Bonnet formula for continuous connections whose curvature is well-defined in the sense of Definition \ref{def:conconn}.

\begin{prop}\label{prop:GB}
Let $\Sigma'$ be a smooth compact surface with non-empty smooth boundary and $M \to \Sigma'$ a principal $S^1$-bundle with a global smooth section $\sigma$. Let $\eta$ be a continuous $S^1$-connection form on $M$. If $\eta$ has curvature $\Omega$ in the sense of Definition \ref{def:conconn}, then we have
\begin{equation}\label{eq:GB}
\frac{1}{2\pi} \int_{\Sigma'} \Omega = \sum_{i=1}^{m} \tau(\widetilde\hol_{\eta}(c_i)),  
\end{equation}
where $c_1, \dots, c_m$ are the boundary curves of $\Sigma'$ whose orientations are induced from  $\Sigma'$, 
and $\widetilde\hol_{\eta}(c_i) \colon \R \to \R$ is the lift of the holonomy map of $\eta$ along $c_i$ with respect to $\sigma$. 
\end{prop}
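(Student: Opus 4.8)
The plan is to reduce the statement to a distributional Stokes identity for the continuous $1$-form $\beta := \sigma^{*}\eta$ on $\Sigma'$. Using the global section $\sigma$, identify $M \cong \Sigma' \times S^1$ so that the principal action is translation in the second factor and $\sigma$ is the constant section; the defining conditions on a continuous $S^1$-connection form (invariance, continuity, $\eta(\partial/\partial\theta)=1$) then force $\eta = d\theta + \pr^{*}\beta$, where $\pr \colon \Sigma' \times S^1 \to \Sigma'$ and $\beta$ is a continuous $1$-form on $\Sigma'$. First I would compute holonomies in this trivialization: since $S^1$ is abelian, the horizontal lift of a piecewise smooth path $\gamma$ is obtained by integration rather than by solving a genuine ODE, and it changes the $S^1$-coordinate by $-\int_{\gamma}\beta$; hence for a closed loop $\gamma$ the holonomy $\hol_{\eta}(\gamma)$ is rotation by $-\int_{\gamma}\beta$, the lifted holonomy $\widetilde\hol_{\eta}(c_i)$ is the translation $x \mapsto x - \int_{c_i}\beta$, and $\tau(\widetilde\hol_{\eta}(c_i)) = -\frac{1}{2\pi}\int_{c_i}\beta$. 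Thus the right-hand side of \eqref{eq:GB} equals $-\frac{1}{2\pi}\int_{\partial\Sigma'}\beta$, and it remains to show $\int_{\Sigma'}\Omega = -\int_{\partial\Sigma'}\beta$. (The overall sign depends on orientation conventions, but the same convention enters the holonomy computation and Definition \ref{def:conconn}, so it cancels in the end.)

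Next I would extract from Definition \ref{def:conconn} the \emph{exact} identity $\int_{U}\Omega = -\int_{\partial U}\beta$ for every simply-connected $U \subset \Sigma'$ with piecewise smooth boundary. The holonomy characterization only gives the congruence $\int_{U}\Omega + \int_{\partial U}\beta \in 2\pi\Z$ (combine $\hol_{\eta}(\partial U) = \exp i\int_{U}\Omega$ with the computation of the first step). To upgrade it, fix a piecewise smooth triangulation of $\Sigma'$ compatible with $\partial\Sigma'$, refined so finely that on each closed $2$-simplex $T$ one has $\lvert\int_{T}\Omega\rvert + \lvert\int_{\partial T}\beta\rvert < 2\pi$; this is possible because the integrable $2$-form $\Omega$ has an absolutely continuous integral and $\beta$ is bounded on the compact surface $\Sigma'$, so both terms tend to $0$ with the mesh. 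Each $T$ is a simply-connected subset with piecewise smooth boundary, so $\int_{T}\Omega + \int_{\partial T}\beta$ lies in $2\pi\Z$ and has absolute value $< 2\pi$, hence vanishes.

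Finally I would sum these vanishing identities over the triangulation. The assignment $U \mapsto \int_{U}\Omega + \int_{\partial U}\beta$ is additive when two pieces are glued along a common edge, since the area integral is additive up to a set of measure zero and the shared edge is traversed in opposite directions in the two induced boundary orientations; therefore $\sum_{T} \bigl(\int_{T}\Omega + \int_{\partial T}\beta\bigr) = \int_{\Sigma'}\Omega + \int_{\partial\Sigma'}\beta$, the interior edges cancelling in pairs and the remaining edges reconstituting $\partial\Sigma'$ with its induced orientation. Since the left-hand side is $0$, we obtain $\int_{\Sigma'}\Omega = -\int_{\partial\Sigma'}\beta$, and combining with the first step yields \eqref{eq:GB}.

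The main obstacle is the low regularity: $\eta$ is only continuous, so neither $\pi^{*}\Omega = d\eta$ nor the classical Stokes theorem is available, and $\Omega$ is merely integrable. The mechanism that replaces them is the ``quantization plus additivity'' argument of the second and third steps, which recovers the Stokes-type identity $\int_{U}\Omega = -\int_{\partial U}\beta$ from the integrality forced by the $S^1$-valued holonomy; one must also check that the triangulation pieces genuinely qualify as admissible regions in Definition \ref{def:conconn} and that the boundary orientations assemble correctly, which is routine but should be stated.
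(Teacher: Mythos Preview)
Your proof is correct and follows essentially the same route as the paper's: trivialize via $\sigma$, triangulate $\Sigma'$, use Definition \ref{def:conconn} to see that $\frac{1}{2\pi}\int_{T}\Omega - \tau(\widetilde\hol_{\eta}(\partial T))$ is an integer on each $2$-simplex, argue that this integer vanishes, and sum with interior edges cancelling. The only (minor) difference is how you kill the integer: the paper contracts $T$ to a point and invokes continuity, whereas you refine the triangulation until both contributions are jointly smaller than $2\pi$; these are equivalent mechanisms.
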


\begin{proof}
Trivialize $M \cong \Sigma' \times S^1$ so that the image of $\sigma$ is mapped to $\Sigma' \times \{1\}$. Consider the covering map $\varphi = \operatorname{id} \times p : \Sigma' \times \R \to \Sigma' \times S^1$, where $p \colon \R \to S^1$ is the standard map $p(t)= e^{it}$. Equip $\Sigma' \times \R$ with a connection $1$-form $\hat{\eta} = \varphi^*\eta$. 
Under this setting, the translation number of the holonomy of arbitrary paths with respect to $\sigma$ makes sense: For a path $c$ on $\Sigma'$, define  $\widehat{\hol}_{\eta}(c)= (\pr_2(\hat{c}(1)) - \pr_2(\hat{c}(0)))/2\pi$, where $\pr_2 \colon \Sigma' \times \R \to \R$ is the second projection and $\hat{c}$ is a horizontal lift of $c$ with respect to $\hat{\eta}$. 
Note that, since the holonomy along every path is a translation on $\R$, this translation number is independent of the choice of $\hat{c}$. Moreover, if $c$ is a piecewise smooth closed path, then we have $\tau(\widetilde\hol_{\eta}(c))=\widehat{\hol}_{\eta}(c)$.

Triangulate $\Sigma'$ piecewise smoothly. In order to prove \eqref{eq:GB}, it suffices to show 
\begin{equation}\label{eq:GB2}
\frac{1}{2\pi} \int_s \Omega = \tau(\widetilde\hol_{\eta}(\partial s))  
\end{equation}
 for each $2$-simplex $s$. Indeed we have $\tau(\widetilde\hol_{\eta}(\partial s)) = \sum_{k=0}^{2}\widehat\hol_{\eta}(t_k)$, where $t_0$, $t_1$, $t_2$ are the edges of $s$, and \eqref{eq:GB} is obtained as the sum of \eqref{eq:GB2} for all $2$-simplices. 
 Finally let us explain that \eqref{eq:GB2} follows from Definition \ref{def:conconn}. By definition, we have $\frac{1}{2\pi} \int_s \Omega - \tau(\widetilde\hol_{\eta}(\partial s)) \in \Z$. 
 Since $\Omega$ is integrable, 
 this integer depends continuously on $s$, and hence it is constant. When we contract $s$ to a point, the difference $\frac{1}{2\pi} \int_s \Omega - \tau(\widetilde\hol_{\eta}(\partial s))$  
 goes to zero, which implies \eqref{eq:GB2}. 
\end{proof}

\section{The Gauss--Bonnet formula for the connection associated with a harmonic measure}\label{sec:proof}

We shall prove a Gauss--Bonnet formula for the $S^1$-connection $\bar\omega$ associated with a harmonic measure (Theorem \ref{thm:GB}). Let $\G$ be a torsion-free lattice of $\PSL$, $\rho$ a homomorphism $\G \to \Homeo$ and $\Sigma = \G \backslash \D$. 
We assume that $\rho (\Gamma)$ has no finite orbit in $S^1$.
Consider the suspension foliation on $M=\Sigma \times_{\rho} S^1$, which is induced from $\{\D \times \{e^{it}\}\}_{e^{it} \in S^1}$, equipped with the standard leafwise hyperbolic metric.
The proof of Theorem \ref{thm:GB} is divided into the following four steps.

\subsection{Step 1: Collapsing the complement of the support of a harmonic measure}\label{subsect:semiconj}
In this first step, we construct a semiconjugacy of the given representation $\rho$ to a new representation $\rho'$ by integrating fiberwise measures to collapse the complement of the support of the given harmonic measure.

As mentioned above, the suspension foliation $\F$ has a leafwise hyperbolic metric. 
Take a harmonic probability measure $\mu$ on $\F$, 
which exists from Theorem \ref{thm:alvarez}. 
By multiplying a scalar, we normalize $\mu$ so that $\mu(M) = 4\pi^2 \lvert e(\Sigma)\rvert$.

Since the natural projection $\D \times \R \to \Sigma \times_{\rho} S^1$ is a covering map, we can pull back $\mu$ to $\D \times \R$ and denote it by $\tilde{\mu}$, which is a harmonic measure on the product foliation $\{\D \times \{t\}\}_{t \in \R}$.
By \eqref{eq:disint}, we may write
\begin{equation*}
\tilde{\mu} = q(z,t) \vol(z) \nu(t), 
\end{equation*}
where $\vol$ is the leafwise volume measure on $\D$, $\nu$ is a Borel measure on $\R$, $q$ is a locally integrable function on $\D \times \R$ with respect to $\vol(z) \nu(t)$ and, for $\nu$--a.e.\ $t$, $q(\cdot, t)$ is a positive harmonic function on $\D$. 

For each $z \in \D$, consider a measure $\mu_z$ on $S^1$ induced by $\tilde{\mu}_z := q(z,t)\nu(t)$ on $\R$. Since $\tilde{\mu}$ is $\G$-invariant by construction, it follows that, for each $\gamma \in \Gamma$, we have
\begin{equation}\label{eq:fibermeas}
\mu_{\gamma z} = \rho(\gamma)_* \mu_z.
\end{equation}

\begin{claim}\label{nonatomic}
$\mu_z$ is a non-atomic measure for every $z \in \D$.
\end{claim}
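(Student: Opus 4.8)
The plan is to prove that $\mu_z$ has no atoms by using the hypothesis that $\rho(\G)$ has no finite orbit in $S^1$, together with the harmonicity of $q(\cdot,t)$ and the $\G$-equivariance \eqref{eq:fibermeas}. First I would fix $z_0 \in \D$ and suppose for contradiction that $\mu_{z_0}(\{a\}) > 0$ for some $a \in S^1$. The key observation is that the function $z \mapsto \mu_z(\{a\})$, which by definition equals $\int_{p^{-1}(a)} q(z,t)\,\nu(t)$ summed over the preimages in $\R$ (or, working leafwise, the harmonic function obtained by integrating $q(z,\cdot)$ against the restriction of $\nu$ to the fiber over $a$), is itself a nonnegative leafwise harmonic function of $z$; hence it is either identically zero or strictly positive on all of $\D$ by Harnack's inequality (Lemma \ref{lem:harnack}(1)). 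So once one atom has positive weight at one point of $\D$, the atom at $a$ has positive $\mu_z$-weight for every $z \in \D$.

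Next I would extract the maximal atomic part. Since each $\mu_z$ is a finite measure (total mass is finite, being a disintegration of the finite measure $\mu$ against $\nu$), for each $z$ there are at most countably many atoms and their total weight is at most the total mass; moreover for each $z$ the set of atoms of weight $> \varepsilon$ is finite. Define $w(z) := \sup_{a \in S^1} \mu_z(\{a\})$ and, using the Harnack argument above, show that the set $A$ of points $a \in S^1$ with $\mu_z(\{a\}) > 0$ is independent of $z$, and that for each fixed $a \in A$ the function $z \mapsto \mu_z(\{a\})$ is positive harmonic on $\D$. Then I would look at the finite set $A_{\max}$ of atoms achieving, say, the largest weight at a fixed basepoint $z = 0$; by the $\G$-equivariance $\mu_{\gamma 0} = \rho(\gamma)_*\mu_0$, the set of atoms of $\mu_{\gamma 0}$ is $\rho(\gamma)(A)$, but it is also $A$ by $z$-independence, so $\rho(\gamma)(A) = A$ for all $\gamma$, i.e. $A$ is a $\rho(\G)$-invariant subset of $S^1$.

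The remaining point is to upgrade invariance of $A$ to \emph{finiteness} of a $\rho(\G)$-invariant subset, which then contradicts the no-finite-orbit hypothesis. Here I would use that $A$, although possibly countably infinite, is stratified by weight: for each $n$, the set $A_n := \{a : \mu_0(\{a\}) \geq 1/n\}$ is finite and, because $\rho(\gamma)$ pushes $\mu_0$ to $\mu_{\gamma 0}$ whose weights at $\rho(\gamma)(a)$ are controlled by Harnack (bounded above and below by $e^{\pm d_{\hyp}(0,\gamma 0)} \mu_0(\{a\})$), the collection $\{A_n\}$ is not quite $\G$-invariant but the top stratum can be handled by passing to $\mu_z$ with $z$ varying: more cleanly, fix $a \in A$ and consider its $\rho(\G)$-orbit $\mathcal{O}$; the function $z \mapsto \sum_{b \in \mathcal{O}} \mu_z(\{b\})$ is harmonic, nonnegative, and $\leq \mu_z(S^1) < \infty$, and by $\G$-equivariance it is $\G$-invariant, hence descends to a bounded harmonic function on the finite-volume surface $\Sigma$, hence is constant; combined with the fact that each individual $\mu_z(\{b\})$ is positive harmonic, a normal-families / Harnack argument forces $\mathcal{O}$ to be finite, contradicting the hypothesis. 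I expect this last step — squeezing a genuine finiteness statement for the invariant set out of the summability and harmonicity — to be the main obstacle, and it is where the no-finite-orbit assumption must be invoked; the Harnack inequality from Lemma \ref{lem:harnack} and the equivariance \eqref{eq:fibermeas} are the main tools, and the argument is purely about the fiber measures, with no reference to the later collapsing construction.
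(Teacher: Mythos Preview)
Your setup is fine through the point where you show that the set $A$ of atoms is independent of $z$ and $\rho(\G)$-invariant, and that $F(z)=\sum_{b\in\mathcal O}\mu_z(\{b\})$ is a $\G$-invariant positive harmonic function on $\D$. The gap is in the last step: you assert that ``a normal-families / Harnack argument forces $\mathcal O$ to be finite,'' but this does not follow. Knowing that a constant is a countable sum of positive harmonic functions on $\D$ puts no finiteness constraint on the index set (think of a convergent sum of rescaled Poisson kernels at countably many boundary points), and the extra equivariance $f_{\rho(\gamma)b}(\gamma z)=f_b(z)$ does not by itself bound the orbit size via Harnack, since the Harnack constant $e^{d_{\hyp}(0,\gamma 0)}$ blows up along $\G$. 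You also implicitly use that $F$ (or $\mu_z(S^1)$) is \emph{bounded}, but a priori you only know it is finite at each $z$; to get constancy you already need an $L^1$-Liouville theorem on $\Sigma$, which you do not invoke.

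The missing ingredient is precisely the theorem of Li / Li--Schoen used in the paper: a complete Riemannian manifold of infinite volume carries no positive $L^1$ harmonic function. The paper applies it directly and avoids your orbit bookkeeping entirely. If $\mu_{z_0}(\{a\})>0$ then $\nu(\{a\})>0$, so the leaf $L$ through $a$ carries the measure induced by $\nu(\{a\})\,q(z,a)\,\vol(z)$, whose total mass is at most $\mu(M)<\infty$; hence $q(\cdot,a)$ is a positive $L^1$ harmonic function on $L$. Since $\rho(\G)$ has no finite orbit, $L$ is an infinite cover of $\Sigma$ and thus has infinite volume, contradicting Li--Schoen. If you want to salvage your outline, the same theorem closes it: $f_a$ is $\Gamma_a$-invariant and descends to an $L^1$ positive harmonic function on $\Gamma_a\backslash\D$, which has infinite volume exactly when $\mathcal O$ is infinite. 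Either way, Harnack alone is not enough; the $L^1$-Liouville input is essential.
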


\begin{proof} 
Assume on the contrary that $\mu_{z_0}(\{a\}) > 0$ for some $z_0 \in \D$ and some $a \in S^1$.
Then, $\nu(\{a\}) > 0$.
Let $L$ be the leaf of $\mathcal{F}$ induced from $\D \times \{a\}$. 
The measure $\nu(\{a\}) q(z,a) \vol(z) \delta_a(t)$ supported on $\D \times \{a\}$ induces a measure $\mu_L$ supported on $L$. Then,
\[
0 < \mu_L(L) \leq \mu(M) < \infty
\]
and $q(z,a)$ induces an $L^1(\vol)$ positive harmonic function on $L$.
Since $\rho(\Gamma)$ is assumed to have no finite orbit in $S^1$, any leaf of $\mathcal{F}$, being an infinite covering of $\Sigma$, is a complete hyperbolic surface of infinite volume. 
This leads to a contradiction by \cite[Theorem 2.4]{Li-Schoen} or \cite[Theorem 1]{Li}. 
\end{proof}

\begin{claim}
$\mu_z(S^1) = 2\pi$ for every $z \in \D$.
\end{claim}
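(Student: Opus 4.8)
The plan is to show that $F(z) := \mu_z(S^1)$ is a $\G$-invariant, $L^1$, positive harmonic function on $\D$; since $\Sigma = \G\backslash\D$ has finite volume it must be constant, and the value of the constant is then forced by the normalization $\mu(M) = 4\pi^2 |e(\Sigma)|$.

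First I would check that $F(z) = \int_{[0,2\pi)} q(z,t)\,\nu(dt)$ is finite for \emph{every} $z \in \D$, not merely for $\vol$-a.e.\ $z$. Since $\int_{\D \times [0,2\pi)} q(z,t)\,\vol(z)\,\nu(dt) = \mu(M) < \infty$, there is at least one point $z_0$ with $F(z_0) < \infty$; for arbitrary $z \in \D$, Harnack's inequality (Lemma~\ref{lem:harnack}(1)) applied to the positive harmonic functions $q(\cdot,t)$ gives $q(z,t) \le e^{d_{\hyp}(z,z_0)} q(z_0,t)$ for $\nu$-a.e.\ $t$, whence $F(z) \le e^{d_{\hyp}(z,z_0)} F(z_0) < \infty$. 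Positivity of $F$ is immediate, since $q(\cdot,t) > 0$ for $\nu$-a.e.\ $t$ and $\nu \ne 0$.

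Next I would verify that $F$ is harmonic and $\G$-invariant. For $\nu$-a.e.\ $t$ the function $q(\cdot,t)$ satisfies the mean value property over small geodesic balls in $\D$; integrating this identity against $\nu(dt)$ and applying Tonelli's theorem (all integrands are nonnegative) transfers the mean value property to $F$, so $F$ is harmonic and, in particular, smooth. By \eqref{eq:fibermeas} we have $\mu_{\gamma z} = \rho(\gamma)_* \mu_z$ for all $\gamma \in \G$, and a homeomorphism of $S^1$ preserves total mass, so $F(\gamma z) = F(z)$. Hence $F$ descends to a positive harmonic function $\bar F$ on $\Sigma$, and $\int_\Sigma \bar F\,\vol = \mu(M) < \infty$, i.e.\ $\bar F \in L^1(\Sigma,\vol)$.

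Finally, since $\Sigma$ is a complete Riemannian surface of finite volume, a nonnegative $L^1$ harmonic (indeed subharmonic) function on $\Sigma$ is constant; this is the uniqueness theorem for $L^1$ solutions of the Laplace equation, \cite[Theorem 1]{Li} (the same reference used in the proof of Claim~\ref{nonatomic}), or alternatively Yau's $L^p$-Liouville theorem together with $\vol(\Sigma) < \infty$. Thus $\bar F$, and therefore $F$, is constant. Comparing total masses, $\mu(M) = \int_\Sigma F\,\vol = F \cdot \vol(\Sigma)$; since $\mu(M) = 4\pi^2|e(\Sigma)|$ and, by Gauss--Bonnet, $\vol(\Sigma) = 2\pi|e(\Sigma)|$, we conclude $F \equiv 2\pi$, i.e.\ $\mu_z(S^1) = 2\pi$ for every $z \in \D$. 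The only delicate point is the last step: one must invoke the Liouville property in the ``nonnegative and $L^1$'' form valid on a complete, finite-volume but non-compact surface; the harmonicity of $F$ and the interchange of integral and Laplacian are routine.
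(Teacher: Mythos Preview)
Your proof is correct and follows essentially the same approach as the paper: show that $z \mapsto \mu_z(S^1)$ descends to an $L^1$ positive harmonic function on $\Sigma$, invoke the $L^1$ Liouville theorem of Li (or Li--Schoen) to conclude constancy, and determine the constant from the normalization $\mu(M)=4\pi^2|e(\Sigma)|$ together with $\vol(\Sigma)=2\pi|e(\Sigma)|$. The paper's proof is simply a terser version of yours; your additional care about everywhere-finiteness via Harnack and the mean-value argument for harmonicity just fills in details the paper leaves implicit.
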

\begin{proof}
From the invariance \eqref{eq:fibermeas}, $\mu_z(S^1)$ defines an $L^1(\vol)$ positive harmonic function on $\Sigma$.
This function must be constant from \cite[Theorem 2.4]{Li-Schoen} or \cite[Theorem 1]{Li}. Since $\mu$ is normalized so that $\mu(M) = 4\pi^2|e(\Sigma)|$ and the volume of $\Sigma$ is $2\pi|e(\Sigma)|$, this constant is $2\pi$.
\end{proof}

Define a monotone map $\tilde{\psi}: \R \to \R$ by 
\[
\tilde{\psi}(t) := \int_0^t \tilde\mu_0(ds)  = \int_0^t q(0,s) \nu(ds).
\]
By construction, $\tilde{\psi}$ induces a monotone continuous map  $\psi: S^1 \to S^1$ of mapping degree one. 

\begin{claim}
\label{newholonomy}
There exists a group homomorphism $\rho': \Gamma \to \mathrm{Homeo}_+(S^1)$ such that $\psi$ is $(\rho,\rho')$-equivariant, namely, we have
\begin{equation}\label{semiconj}
\psi \circ \rho(\gamma) = \rho'(\gamma) \circ \psi \quad\quad (\forall \gamma\in\Gamma).
\end{equation}
\end{claim}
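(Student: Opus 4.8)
The plan is to define $\rho'(\gamma)$ directly by the formula it is forced to satisfy and then check it is well-defined and a homomorphism. First I would observe that $\psi$ is a continuous, monotone, degree-one circle map, so it is surjective, and the preimage $\psi^{-1}(\{b\})$ of any point $b \in S^1$ is a single point or a closed arc; the "plateaus" of $\tilde\psi$ are exactly the maximal intervals on which $\tilde\mu_0$ gives zero mass, i.e. the complementary intervals of $\operatorname{supp}\mu_0$. For $x \in S^1$ in the image of $\psi$, pick any $t$ with $\psi(t)=x$ and set $\rho'(\gamma)(x) := \psi(\rho(\gamma)(t))$. The main point to verify is that this does not depend on the choice of $t$: if $\psi(t_1)=\psi(t_2)$ with $t_1 < t_2$, then $[t_1,t_2]$ lies in a plateau of $\psi$, meaning $\mu_0$ assigns zero mass to the open arc between them; I would then use the equivariance $\mu_{\gamma^{-1}0} = \rho(\gamma)^{-1}_*\mu_0$ together with the Harnack inequality (Lemma \ref{lem:harnack}(1)), which gives that $q(0,\cdot)$ and $q(\gamma^{-1}0,\cdot)$ are comparable up to the bounded factor $e^{d_{\hyp}(0,\gamma^{-1}0)}$, to conclude that the pushforward measure $\rho(\gamma)_*\mu_{\gamma^{-1}0}=\mu_0$ and $\mu_0$ have the same null sets as $\rho(\gamma)_*\mu_0$ up to this comparability — more simply, $\operatorname{supp}\mu_{z}$ is independent of $z$ by Harnack, call it $C$, and $\rho(\gamma)(C)=C$ by \eqref{eq:fibermeas}. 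Hence $\rho(\gamma)$ maps plateaus of $\psi$ to plateaus of $\psi$, so $\psi(\rho(\gamma)(t_1))=\psi(\rho(\gamma)(t_2))$, and $\rho'(\gamma)$ is well-defined on $\operatorname{im}\psi$.

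Next I would check that $\rho'(\gamma)$, so defined on $\operatorname{im}\psi=S^1$ (recall $\psi$ is onto), is an orientation-preserving homeomorphism. Monotonicity: if $\psi(t_1) \le \psi(t_2)$ then, choosing representatives, $t_1 \le t_2$ up to sliding within a plateau, so $\rho(\gamma)(t_1) \le \rho(\gamma)(t_2)$ and applying the monotone $\psi$ gives $\rho'(\gamma)(\psi(t_1)) \le \rho'(\gamma)(\psi(t_2))$; thus $\rho'(\gamma)$ is monotone. It is a bijection because $\rho'(\gamma^{-1})$ is an inverse: $\rho'(\gamma^{-1})(\rho'(\gamma)(x)) = \psi(\rho(\gamma^{-1})\rho(\gamma)(t)) = \psi(t) = x$, using that $\rho$ is a homomorphism and that this computation is independent of the choice of $t$ by the previous paragraph. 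A monotone bijection of $S^1$ of the correct degree is automatically continuous, hence a homeomorphism; orientation-preservation (degree one) follows since $\psi$ and $\rho(\gamma)$ both have degree one. The homomorphism property $\rho'(\gamma_1\gamma_2) = \rho'(\gamma_1)\rho'(\gamma_2)$ is the same direct computation: $\rho'(\gamma_1)(\rho'(\gamma_2)(\psi(t))) = \rho'(\gamma_1)(\psi(\rho(\gamma_2)(t))) = \psi(\rho(\gamma_1)\rho(\gamma_2)(t)) = \psi(\rho(\gamma_1\gamma_2)(t))=\rho'(\gamma_1\gamma_2)(\psi(t))$. Finally \eqref{semiconj} holds by the very definition of $\rho'$.

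The step I expect to be the main obstacle is the well-definedness, i.e. showing $\rho(\gamma)$ preserves the partition of $S^1$ into $\operatorname{supp}\mu_0$ and the complementary plateau arcs. The clean way is to establish first that $\operatorname{supp}\mu_z$ does not depend on $z\in\D$: by Harnack's inequality the harmonic functions $q(z,\cdot)$ are mutually comparable on compact sets, so the measures $\tilde\mu_z = q(z,t)\nu(t)$ are mutually absolutely continuous with locally bounded densities, hence have a common support in $S^1$; then \eqref{eq:fibermeas} reads $\rho(\gamma)_*\mu_0 = \mu_{\gamma 0}$, and taking supports gives $\rho(\gamma)(\operatorname{supp}\mu_0) = \operatorname{supp}\mu_{\gamma 0} = \operatorname{supp}\mu_0$. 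Once this is in hand, everything else is the routine bookkeeping above. I would also remark that since $\mu_z$ is non-atomic (Claim \ref{nonatomic}) and has the fixed support $C$, the complement of $C$ is a countable union of open arcs on each of which $\psi$ is constant, and $\psi$ is injective on $C$ after collapsing each such arc — this is exactly the statement that $\psi$ is a semiconjugacy in the usual sense.
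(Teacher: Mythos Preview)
Your proposal is correct and follows exactly the same approach as the paper's proof: define $\rho'(\gamma)(x) := \psi(\rho(\gamma)(t))$ for any $t$ with $\psi(t)=x$, then verify well-definedness and the homomorphism property. In fact you supply the details the paper omits under ``it is easy to see''---in particular, your argument that $\operatorname{supp}\mu_z$ is independent of $z$ via Harnack, combined with \eqref{eq:fibermeas}, is precisely the right justification for well-definedness.
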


\begin{proof}
Fix $\gamma \in \Gamma$. In order to define $\rho'(\gamma): S^1 \to S^1$, take $e^{it'} \in S^1$. Since $\psi$ is surjective, there exists $e^{it} \in S^1$ such that $\psi(e^{it}) = e^{it'}$. Then we define $\rho'(\gamma) e^{it'} := \psi(\rho(\gamma)e^{it})$. It is easy to see that this $\rho'$ is well-defined, and that $\rho'$ is a group homomorphism $\Gamma \to \Homeo$. The equation \eqref{semiconj} follows from the construction. 
\end{proof}

Actually, $\rho'$ is a representation of $\Gamma$ in the bi-Lipschitz homeomorphism group of $S^1$, as it will follow from the arguments in next step.

\subsection{Step 2: Construction of a smooth structure}\label{subsect:smooth-str}
Consider the suspension bundle $\pi' : \Sigma \times_{\rho'} S^1 \to \Sigma$ for $\rho'$
and let $\F'$ denote the suspension foliation on $M' := \Sigma \times_{\rho'} S^1$.
In this step, we construct a smooth structure on our $S^1$-bundle $M'$ with respect to which $\F'$ is a transversely Lipschitz foliation.

Using $\psi$ constructed in the first step, we define 
\[
\tilde{\Psi}: \D \times \R \to \D \times \R, \quad \tilde{\Psi}(z,t) := (z, \tilde\psi(t))
\]
that induces a surjective continuous map $\Psi: M \to M'$. 
Let us consider the push-forward measures $\mu' := \Psi_* {\mu}$ and $\tilde{\mu}' := \tilde{\Psi}_* {\tilde{\mu}}$,
which are harmonic measures on $\F'$ and the product foliation $\{ \D \times \{t\} \}_{t \in \R}$, respectively. 
By \eqref{eq:disint}, like as $\tilde{\mu}$, we have
\begin{equation}\label{eq:harmstr}
\tilde{\mu}' = h(z,t) \vol(z) \lambda(t), 
\end{equation}
where $\lambda$ is the Lebesgue measure on $\R$ and 
$h$ is a locally integrable function on $\D \times \R$ such that, for a.e.\ $t$, $h(\cdot, t)$ is a positive harmonic function on $\D$. 
In particular, $h(0,t)=1$ holds for a.e.\ $t$.
Note that $\tilde\mu'_z := h(z,t)\lambda(t)$ induces a non-atomic measure $\mu'_z$ on $S^1$ with $\mu'_z(S^1) = 2\pi$ for each $z \in \D$.

To construct a desired smooth structure on $M'$, we integrate the harmonic measure $\mu'$ on each fiber of $\pi': M' \to \Sigma$. 
The next claim, a direct consequence of Harnack's inequality, is a key ingredient of our proof and will justify analytic arguments in next steps.

\begin{lem}\label{lem:bilipschitz}
Define a map $\tilde{\Phi}: \D \times \R \to \D \times \R$ by $\tilde{\Phi}(z,t) := (z, \varphi(z,t))$, where
\[
\varphi(z,t):=\int_0^t h(z,s) ds
\]
and $h$ is the function that appeared in \eqref{eq:harmstr}.
Then $\tilde{\Phi}$ is a locally bi-Lipschitz homeomorphism. In particular, for every $z \in \D$, the map $\varphi(z,\cdot)$ is a locally bi-Lipschitz homeomorphism of $\R$, whose inverse is denoted by $\tau(z,\cdot)$.
\end{lem}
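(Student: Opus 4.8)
The plan is to show that $\tilde\Phi$ is bijective, continuous with continuous inverse, and that on each compact set both $\tilde\Phi$ and $\tilde\Phi^{-1}$ are Lipschitz, by reducing everything to the one-variable maps $t \mapsto \varphi(z,t)$ and using Harnack's inequality (Lemma \ref{lem:harnack}) to control $h$ uniformly in $z$ over compact subsets of $\D$. First I would observe that $\tilde\Phi$ has the form $(z,t)\mapsto(z,\varphi(z,t))$, so it suffices to understand the family $\varphi(z,\cdot)$. For fixed $z$, since $h(z,\cdot)>0$ and (being a harmonic function of the first variable for a.e.\ $t$) is locally integrable in $t$, the function $\varphi(z,\cdot)$ is continuous, strictly increasing, and satisfies $\varphi(z,0)=0$; moreover $\varphi(z,t)\to\pm\infty$ as $t\to\pm\infty$ because $h(z,\cdot)$ is comparable to $h(0,\cdot)$ by the next step and $\int_0^t h(0,s)\,ds=\varphi(0,t)$ is, up to the $2\pi\mathbb Z$-equivariance of $\tilde\psi$ built in Step 1, essentially the identity's lift — so $\varphi(z,\cdot)$ is a homeomorphism of $\R$ for each $z$.

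The key quantitative input is the following consequence of Harnack's inequality: fix a compact set $C\Subset\D$ and let $R=\sup_{z\in C} d_{\hyp}(0,z)<\infty$. By Lemma \ref{lem:harnack}(1) applied to the positive harmonic function $h(\cdot,s)$ (for a.e.\ $s$),
\[
e^{-R}\,h(0,s)\ \le\ h(z,s)\ \le\ e^{R}\,h(0,s)\qquad\text{for all }z\in C,\ \text{a.e.\ }s.
\]
Integrating from $0$ to $t$ gives, for all $z\in C$ and all $t$,
\[
e^{-R}\,\bigl(\varphi(0,t)-\varphi(0,t')\bigr)\ \le\ \varphi(z,t)-\varphi(z,t')\ \le\ e^{R}\,\bigl(\varphi(0,t)-\varphi(0,t')\bigr)\qquad(t\ge t').
\]
Since $\varphi(0,\cdot)$ is itself bi-Lipschitz on compact sets — indeed $h(0,\cdot)$ is a.e.\ bounded above and bounded below away from $0$ on compact $t$-intervals, as $h(0,s)$ is, for a.e.\ $s$, a positive harmonic function with $h(0,0)$-value $1$ and is locally integrable, and one uses the mean value / Harnack comparison in the $z$-variable at a fixed nearby point to get a two-sided pointwise bound in $s$ on compacta — this yields that $\varphi(z,\cdot)$ is bi-Lipschitz on compact $t$-intervals with constants independent of $z\in C$. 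To promote this to local bi-Lipschitz control of the two-variable map $\tilde\Phi$, it remains to bound $|\varphi(z,t)-\varphi(z',t)|$ in terms of $|z-z'|$: here I would use that $z\mapsto h(z,s)$ is harmonic, hence (on a slightly larger compact set, via interior gradient estimates for harmonic functions together with the uniform Harnack bound above) has gradient bounded by a constant times $\sup h(\cdot,s)\le e^{R'}h(0,s)$ on $C$, so $|h(z,s)-h(z',s)|\le K|z-z'|\,h(0,s)$ and integrating gives $|\varphi(z,t)-\varphi(z',t)|\le K|z-z'|\,\varphi(0,|t|)$. Combining the two estimates shows $\tilde\Phi$ is locally Lipschitz; the lower bound $\varphi(z,t)-\varphi(z,t')\ge e^{-R}(\varphi(0,t)-\varphi(0,t'))$ together with the fact that the first coordinate is the identity shows $\tilde\Phi$ is a homeomorphism with locally Lipschitz inverse, i.e.\ a locally bi-Lipschitz homeomorphism. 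The final assertion about $\varphi(z,\cdot)$ and its inverse $\tau(z,\cdot)$ is then immediate by restricting to a fixed slice.

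The main obstacle I anticipate is the measure-theoretic bookkeeping needed to pass from the "for a.e.\ $t$, $h(\cdot,t)$ is harmonic" hypothesis to honest pointwise two-sided bounds that can be integrated in $t$: one must choose a good version of $h$, check that the Harnack inequalities hold for a.e.\ $s$ with a single null set, and verify that $h(0,\cdot)$ is genuinely locally bounded above and below away from zero on compact intervals (not merely integrable) — the latter follows from applying Harnack in the $z$-direction to transfer local $L^1$-integrability at one point to a pointwise bound, but it needs to be spelled out carefully. The gradient estimate for the $z$-dependence is standard for harmonic functions but should be invoked on a compact set slightly larger than $C$ so that interior estimates apply. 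Once these regularity points are settled, the bi-Lipschitz conclusion is a routine integration of the Harnack comparison.
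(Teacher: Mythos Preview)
Your outline follows the paper's approach closely (Harnack to bound $h$ two-sidedly, integrate to get Lipschitz control of $\varphi$ and its inverse), but there is one genuine gap: your treatment of $h(0,\cdot)$.

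You correctly identify as ``the main obstacle'' the need to know that $h(0,\cdot)$ is bounded above and below on compact $t$-intervals, and you propose to extract this from local integrability via ``Harnack comparison in the $z$-variable at a fixed nearby point''. That argument cannot succeed: Harnack in the $z$-variable compares $h(z_1,s)$ with $h(z_2,s)$ for the \emph{same} $s$, so it gives no control whatsoever on how $h(0,s)$ varies with $s$. What actually resolves this is something you almost say in your first paragraph but then abandon: by the very construction of $\tilde\mu'$ as the push-forward by $\tilde\Psi$, the fiber measure at $z=0$ is Lebesgue, i.e.\ $h(0,t)=1$ for a.e.\ $t$ (this is stated explicitly in the paper just before the lemma). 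Once you use this, $\varphi(0,t)=t$ exactly, your Harnack bound becomes simply $e^{-R}\le h(z,s)\le e^{R}$ on $C$, and the $t$-Lipschitz and inverse-$t$-Lipschitz estimates are immediate with explicit constants. The ``measure-theoretic bookkeeping'' you worried about evaporates.

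Two smaller points. For the $z$-Lipschitz estimate of $\varphi$, the paper avoids interior gradient estimates and instead uses Harnack directly: $|h(z_1,t)-h(z_2,t)|\le (e^{d_{\hyp}(z_1,z_2)}-1)\max\{h(z_1,t),h(z_2,t)\}$, then bounds $e^x-1$ linearly on bounded sets. Your gradient-estimate route would also work (and is essentially Lemma~\ref{lem:harnack}(2)), but is no simpler. Finally, you assert that the lower bound on $\partial_t\varphi$ plus the identity in the first coordinate gives a locally Lipschitz inverse; this handles the $\theta$-dependence of $\tau$ but not its $z$-dependence. The paper closes that with the chain
\[
|\tau(z_1,\theta)-\tau(z_2,\theta)|=|\tau(z_2,\varphi(z_2,\tau(z_1,\theta)))-\tau(z_2,\theta)|\le e^{N}|\varphi(z_2,\tau(z_1,\theta))-\varphi(z_1,\tau(z_1,\theta))|,
\]
feeding the $z$-Lipschitz bound for $\varphi$ back in. You should make this step explicit.
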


\begin{proof}
It is clear that $\tilde{\Phi}$ is bijective. To show that $\tilde{\Phi}$ is locally Lipschitz, it is enough to see that $\varphi: \D \times \R \to \R$ is Lipschitz in each variable on 
\[
D_N := \{ (z,t) \in \D \times \R \mid d_{\hyp}(0,z) \leq N, \lvert t \rvert \leq 2\pi N\}
\]
for arbitrary $N \in \N$.
Since Lemma \ref{lem:harnack} implies that, for a.e.\ $t$,
\[
0 < h(z, t) \leq e^{d_{\hyp}(0,z)} h(0,t) = e^{d_{\hyp}(0,z)} \leq e^N,
\]
 we have, for $(z,t_1), (z,t_2) \in D_N$,
\[
 \lvert \varphi(z, t_1) - \varphi(z, t_2) \rvert
 = \left\lvert \int_{t_2}^{t_1}h(z,t) dt \right\rvert
 \leq e^{N}\lvert t_1 - t_2\rvert.
\]
Next, let $(z_1,t), (z_2,t) \in D_N$. Lemma \ref{lem:harnack} also yields that, for a.e.\ $t$, 
\[
\lvert h(z_1, t) - h(z_2, t)\rvert
\leq (e^{d_{\hyp}(z_1,z_2)}-1)\max \{ h(z_1, t), h(z_2, t)\}.
\]
Since the exponential function is convex, we have
\[
e^{d_{\hyp}(z_1,z_2)}-1 \leq \frac{e^{2N}}{2N} d_{\hyp}(z_1,z_2).
\]
Then, we have
\begin{equation}\label{eq:lipschitz1}
\lvert {\varphi}(z_1, t) - {\varphi}(z_2, t)\rvert
\leq \left\lvert \int_{0}^{t} \lvert h(z_1,s) - h(z_2, s)\rvert ds  \right\rvert 
\leq \pi e^{3N} d_{\hyp}(z_1, z_2).
\end{equation}
Note that this inequality holds also for $t < 0$. Therefore, $\varphi$ is locally Lipschitz, and hence so is $\tilde{\Phi}$.

It remains to show that $\tilde{\Phi}^{-1}$ is locally Lipschitz. 
We denote the inverse map of $\varphi(z, \cdot): \R \to \R$ by $\tau(z, \cdot)$ for each $z \in \D$. Again, it is enough to see that $\tau$ is Lipschitz in each variable on $D_N$ for arbitrary $N \in \N$.
Let $(z,\theta_1),(z,\theta_2) \in D_N$ and denote $t_j := \tau(z, \theta_j)$ for $j = 1, 2$.
From Lemma \ref{lem:harnack}, we have
\[
h(z, t) \geq e^{-d_{\hyp}(0,z)} h(0,t) = e^{-d_{\hyp}(0,z)} \geq e^{-N}
\]
for a.e.\ $t$. Hence, it follows that
\[
\lvert \varphi(z,t_1) - \varphi(z,t_2) \rvert
 = \left\lvert \int_{t_2}^{t_1}h(z_1,t) dt \right\rvert
 \geq e^{-N} \lvert t_1 - t_2 \rvert,
\]
which is equivalent to 
\begin{equation} \label{eq:lipschitz2}
 \lvert \tau(z,\theta_1) - \tau(z,\theta_2) \rvert 
 \leq e^{N} \lvert \theta_1 - \theta_2 \rvert.
\end{equation}
Next, for each $(z_1,\theta),(z_2,\theta) \in D_N$, we deduce from \eqref{eq:lipschitz2} and \eqref{eq:lipschitz1} that
\begin{align*}
\lvert\tau(z_1,\theta) - \tau(z_2,\theta)\rvert
&= \lvert \tau(z_2, \varphi(z_2, \tau(z_1, \theta))) - \tau(z_2,\theta)\rvert\\
& \leq e^N \lvert \varphi(z_2, \tau(z_1,\theta)) - \theta \rvert\\
& = e^N \lvert \varphi(z_2, \tau(z_1,\theta)) - \varphi(z_1, \tau(z_1,\theta))\rvert\\
& \leq \pi e^{4N} d_{\hyp}(z_1, z_2).
\end{align*}
This completes the proof.
\end{proof}

Using $\tilde{\Phi}$, we define a new $\Gamma$-action on $\D \times S^1$ by
\begin{equation} \label{smoothaction}
\gamma \cdot (z, e^{i\theta}) := 
(\gamma z, e^{i{\varphi}(\gamma z,  {\tilde{\rho}'(\gamma)}(\tau(z,\theta)))})
\end{equation}
where $\gamma \in \Gamma$, $(z,e^{i\theta}) \in \D \times S^1$ and $\tilde{\rho}'(\gamma)$ denotes an arbitrary lift of $\rho'(\gamma) \in \mathrm{Homeo}_+(S^1)$ to $\mathrm{Homeo}_+(\R)$. 
We see from the following claim that this $\Gamma$-action is smooth.   
\begin{claim}
$\varphi(\gamma z, \tilde{\rho}'(\gamma)( \tau(z,\theta)))$ is a smooth function on $\D \times \R$.
\end{claim}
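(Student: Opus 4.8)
The plan is to show that the function in question equals $\theta+c_{\gamma}(z)$ for a smooth function $c_{\gamma}$ depending only on $z$, which reduces the claim to a regularity statement about harmonic functions on $\D$ and sidesteps the fact that $\varphi(z,\cdot)$ and $\tilde{\rho}'(\gamma)$ are a priori only bi-Lipschitz. The point is that, by \eqref{eq:harmstr}, $\varphi(z,\cdot)$ is nothing but the distribution function of the fibre measure $\tilde{\mu}'_{z}=h(z,t)\lambda(t)$ normalized to vanish at the origin, so that $\varphi(z,t)-\varphi(z,t')=\tilde{\mu}'_{z}([t',t])$ for $t'<t$, and this family of measures is $\Gamma$-equivariant.

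First I would record that $\tilde{\mu}'_{\gamma z}=\tilde{\rho}'(\gamma)_{*}\tilde{\mu}'_{z}$. Since $\mu'=\Psi_{*}\mu$, since $\psi$ is $(\rho,\rho')$-equivariant by \eqref{semiconj}, and since the fibre measures of $\mu$ satisfy \eqref{eq:fibermeas}, the fibre measures of $\mu'$ on $S^{1}$ satisfy $\mu'_{\gamma z}=\rho'(\gamma)_{*}\mu'_{z}$; lifting to $\R$ and using that $\tilde{\mu}'_{z}$ is $2\pi\Z$-periodic --- so that its push-forward by any lift of $\rho'(\gamma)$ is the same and coincides with the $2\pi\Z$-periodic lift of $\mu'_{\gamma z}$ --- yields the asserted identity for the chosen lift $\tilde{\rho}'(\gamma)$. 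Combining this with the distribution-function description of $\varphi$ and the monotonicity and bijectivity of $\tilde{\rho}'(\gamma)$, and using that $\mu'_{z}$ is non-atomic (Claim \ref{nonatomic}) so that endpoints cause no trouble, one gets $\tilde{\mu}'_{\gamma z}\bigl(\tilde{\rho}'(\gamma)([0,t])\bigr)=\tilde{\mu}'_{z}([0,t])$ for every $t$, hence
\[
\varphi(\gamma z,\tilde{\rho}'(\gamma)(t))=\varphi(z,t)+c_{\gamma}(z),\qquad c_{\gamma}(z):=\varphi(\gamma z,\tilde{\rho}'(\gamma)(0)),\qquad \forall t\in\R.
\]
Substituting $t=\tau(z,\theta)$ and using that $\tau(z,\cdot)$ is the inverse of $\varphi(z,\cdot)$ (Lemma \ref{lem:bilipschitz}) gives $\varphi(\gamma z,\tilde{\rho}'(\gamma)(\tau(z,\theta)))=\theta+c_{\gamma}(z)$.

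It then remains to prove that $c_{\gamma}(z)=\int_{0}^{t_{\gamma}}h(\gamma z,s)\,ds$ is smooth on $\D$, where $t_{\gamma}:=\tilde{\rho}'(\gamma)(0)\in\R$ is a constant. In real dimension two the leafwise hyperbolic Laplacian is a positive multiple of the Euclidean Laplacian, so for a.e.\ $s$ the function $h(\cdot,s)$ is an ordinary harmonic function on $\D$, and therefore so is $h(\gamma\cdot,s)=h(\cdot,s)\circ\gamma$ since $\gamma$ is a holomorphic automorphism of $\D$. Harnack's inequality (Lemma \ref{lem:harnack}) together with $h(0,s)=1$ a.e.\ gives $e^{-d_{\hyp}(0,\gamma z)}\leq h(\gamma z,s)\leq e^{d_{\hyp}(0,\gamma z)}$ for a.e.\ $s$, so the family $\{h(\gamma\cdot,s)\}_{s}$ is bounded uniformly on compact subsets of $\D$; the standard interior derivative estimates for harmonic functions then bound each $z$-derivative of $h(\gamma\cdot,s)$ uniformly in $s$ on compacta. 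As $s$ ranges over the bounded interval between $0$ and $t_{\gamma}$, differentiation under the integral sign is justified to all orders, so $c_{\gamma}\in C^{\infty}(\D)$ (indeed $c_{\gamma}$ is harmonic), and $\varphi(\gamma z,\tilde{\rho}'(\gamma)(\tau(z,\theta)))=\theta+c_{\gamma}(z)$ is smooth on $\D\times\R$. Incidentally, this exhibits the action \eqref{smoothaction} as the fibrewise rotations $e^{i\theta}\mapsto e^{i(\theta+c_{\gamma}(z))}$.

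I expect the main obstacle to be the first step: correctly identifying $\varphi(z,\cdot)$ with the normalized distribution function of $\tilde{\mu}'_{z}$ and checking that the fibre-measure equivariance descends from $S^{1}$ to $\R$ independently of the chosen lifts, while handling the interval endpoints carefully --- which is exactly where non-atomicity of the fibre measures is used. Once the identity $\varphi(\gamma z,\tilde{\rho}'(\gamma)(t))=\varphi(z,t)+c_{\gamma}(z)$ is in hand, the required smoothness is a routine consequence of standard estimates for harmonic functions.
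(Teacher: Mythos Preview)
Your proposal is correct and follows essentially the same approach as the paper: both use the equivariance $\mu'_{\gamma z}=\rho'(\gamma)_{*}\mu'_{z}$ together with the identification of $\varphi(z,\cdot)$ as the distribution function of $\tilde{\mu}'_{z}$ to rewrite the expression as $\theta+\int_{0}^{\tilde{\rho}'(\gamma)(0)}h(\gamma z,t)\,dt$, and then observe that this last integral is harmonic (hence smooth) in $z$. Your write-up is somewhat more careful than the paper's in spelling out the lift of the equivariance to $\R$ and the role of non-atomicity, but the argument is the same.
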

\begin{proof}
Using the invariance $\mu'_{\gamma z} = \rho'(\gamma)_* \mu'_z$, we have
\begin{align*}
\varphi(\gamma z, \tilde{\rho}'(\gamma)( \tau(z,\theta)))
& = \int_0^{\tilde{\rho}'(\gamma)(\tau(z,\theta))} \mu'_{\gamma z} (dt)\\
& = \int_{\tilde{\rho}'(\gamma)( 0)}^{\tilde{\rho}'(\gamma)(\tau(z,\theta))} \mu'_{\gamma z} (dt)
+ \int_0^{\tilde{\rho}'(\gamma)(0)} \mu'_{\gamma z} (dt) \\
& = \int_{0}^{\tau(z,\theta)} \mu'_{z} (dt)
+ \int_0^{\tilde{\rho}'(\gamma)(0)} \mu'_{\gamma z} (dt) \\
& = \varphi(z,\tau(z,\theta)) + \int_0^{\tilde{\rho}'(\gamma)(0)} \mu'_{\gamma z} (dt) \\
& = \theta + \int_0^{\tilde{\rho}'(\gamma)(0)} h({\gamma z}, t) dt.
\end{align*}
The last term $ \int_0^{\tilde{\rho}'(\gamma)(0)} h({\gamma z},t) dt$ is a positive harmonic function in $z$, hence, smooth in $z$.
\end{proof}

Using the $\Gamma$-action given in \eqref{smoothaction}, we define a smooth $S^1$-bundle $P := \Gamma \setminus (\D \times S^1)$.
The bi-Lipschitz homeomorphism $\widetilde{\Phi}$ induces a homeomorphism $\Phi \colon M' \to P$.

\subsection{Step 3: The $S^1$-connection associated with a harmonic measure}\label{subsect:curvature-est}
In this step, we construct a continuous connection form on $P$ whose curvature is bounded by one everywhere in its modulus. 

Let $\mathcal{F}_P$ denote the transversely Lipschitz foliation on $P$ obtained by mapping the suspension foliation $\mathcal{F}'$ on $M'$ by $\Phi$. Let $\mathcal{F}_{\widetilde{P}}$ denote its lift to $\widetilde{P} := \D \times \R$. 
Note that the leaves of $\mathcal{F}_{\widetilde{P}}$ are the graphs of positive harmonic functions $\varphi(\cdot, t): \D \to \R$.
Let $z = x_1 + ix_2 \in \D$.
Let us compute the slope of this graph. Recall that, for each $z \in \D$, the map $\tau(z,\cdot) \colon \R \to \R$ is the inverse of a locally bi-Lipschitz homeomorphism $\varphi(z,\cdot)$, which was defined by $\varphi(z,t) = \int_0^t h(z,s) ds$ in Lemma \ref{lem:bilipschitz}. For $j = 1, 2$, let 
\[
\omega_j(z, \theta) := \left.\frac{\partial \varphi}{\partial x_j}(z, t)\right\rvert_{t = \tau(z,\theta)}.
\]

\begin{lem} \label{lem:bddness}
The function $\omega_j$ is continuous on $\widetilde{P}$.
For each $z \in \D$, $\omega_j(z, \cdot)$ is a periodic Lipschitz function on $\R$ of period $2\pi$.
\end{lem}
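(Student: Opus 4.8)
The plan is to derive every assertion about $\omega_j$ from the integral representation
\[
\frac{\partial \varphi}{\partial x_j}(z,t)=\int_{0}^{t}\frac{\partial h}{\partial x_j}(z,s)\,ds
\]
together with the fact that $\tau(z,\cdot)$ is a $2\pi$-equivariant Lipschitz homeomorphism of $\R$ (Lemma~\ref{lem:bilipschitz}).

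\emph{Harmonicity of $\varphi$ in $z$ and the integral representation.}
First I would fix a representative of $h$ and a Lebesgue-null set $N\subset\R$ such that, for every $t\in\R\setminus N$, the function $h(\cdot,t)$ is positive harmonic on $\D$ and $h(0,t)=1$; then Harnack's inequality (Lemma~\ref{lem:harnack}(1)) gives $e^{-d_{\hyp}(0,z)}\le h(z,t)\le e^{d_{\hyp}(0,z)}$ for all $z\in\D$ and $t\notin N$, so $h$ is locally bounded on $\D\times\R$. Using this bound to apply Fubini's theorem when $z$ ranges over a closed disk $\overline{B(z_{0},r)}\subset\D$, I would check that $z\mapsto\varphi(z,t)=\int_{0}^{t}h(z,s)\,ds$ is continuous and satisfies the circular mean value property, hence is harmonic, in particular smooth, in $z$ for each fixed $t$; in particular $\omega_{j}$ is well defined. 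Applying the divergence theorem to the harmonic function $\varphi(\cdot,t)$ then gives
\[
\frac{\partial\varphi}{\partial x_{j}}(z_{0},t)=\frac{1}{\pi r^{2}}\int_{|\zeta|=r}\varphi(z_{0}+\zeta,t)\,\frac{\zeta_{j}}{r}\,dS(\zeta),
\]
and interchanging the $\zeta$-integral with $\int_{0}^{t}$ (Fubini again, by the local bound), using that $w\mapsto h(w,s)$ is harmonic for $s\notin N$ and that $N$ is null, yields the integral representation displayed above.

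\emph{Lipschitz bounds and $2\pi$-periodicity.}
Harnack's inequality (Lemma~\ref{lem:harnack}(2)) gives $\bigl|\tfrac{\partial h}{\partial x_{j}}(z,t)\bigr|\le\tfrac{2}{1-|z|^{2}}\,h(z,t)\le\tfrac{2e^{d_{\hyp}(0,z)}}{1-|z|^{2}}=:C_{1}(z)$ for $t\notin N$, so by the integral representation $t\mapsto\tfrac{\partial\varphi}{\partial x_{j}}(z,t)$ is Lipschitz on $\R$ with constant $C_{1}(z)$. For periodicity: the fibre density $h(z,\cdot)$ is $2\pi$-periodic and $\int_{0}^{2\pi}h(z,s)\,ds=\mu'_{z}(S^{1})=2\pi$, hence $\varphi(z,t+2\pi)=\varphi(z,t)+2\pi$; differentiating in $z$ shows $\tfrac{\partial\varphi}{\partial x_{j}}(z,\cdot)$ is $2\pi$-periodic, and consequently $\tau(z,\theta+2\pi)=\tau(z,\theta)+2\pi$. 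From Lemma~\ref{lem:bilipschitz} (or directly from $h(z,t)\ge e^{-d_{\hyp}(0,z)}$) the map $\tau(z,\cdot)$ is Lipschitz with constant $e^{d_{\hyp}(0,z)}$, globally on $\R$ in view of the $2\pi$-equivariance. Therefore $\omega_{j}(z,\cdot)=\tfrac{\partial\varphi}{\partial x_{j}}(z,\tau(z,\cdot))$ is a composition of Lipschitz maps, hence Lipschitz, and it is $2\pi$-periodic because $\tfrac{\partial\varphi}{\partial x_{j}}(z,\cdot)$ is $2\pi$-periodic and $\tau(z,\cdot)$ commutes with $t\mapsto t+2\pi$.

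\emph{Joint continuity on $\widetilde{P}=\D\times\R$, and the main obstacle.}
The map $(z,\theta)\mapsto\tau(z,\theta)$ is jointly continuous because $(z,\theta)\mapsto(z,\tau(z,\theta))$ is the inverse of the homeomorphism $\widetilde{\Phi}$ of Lemma~\ref{lem:bilipschitz}. The map $(z,t)\mapsto\tfrac{\partial\varphi}{\partial x_{j}}(z,t)$ is jointly continuous: in the boundary mean value formula above, written as an integral over the fixed sphere $\{|\zeta|=r\}$ with $r$ chosen uniformly small near a given point, the integrand $\varphi(z_{0}+\zeta,t)$ is jointly continuous in $(z_{0},t)$ (as $\varphi=\pr_{2}\circ\widetilde{\Phi}$ is continuous) and the sphere is compact, so the integral depends continuously on $(z_{0},t)$. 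Composing, $\omega_{j}$ is continuous on $\widetilde{P}$. The one delicate point is the first paragraph: since $h(\cdot,s)$ is harmonic only for a.e.\ $s$, neither the harmonicity of $\varphi(\cdot,t)$ nor the differentiation-under-the-integral identity is automatic; both are obtained by passing to the mean-value (averaged) forms of the relevant pointwise identities and invoking Fubini's theorem, which is legitimate precisely because Harnack's inequality renders $h$ locally bounded on $\D\times\R$. Everything else is a routine combination of Lemma~\ref{lem:harnack} and Lemma~\ref{lem:bilipschitz}.
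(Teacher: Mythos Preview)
Your proof is correct and follows essentially the same route as the paper: derive the integral representation $\partial\varphi/\partial x_j(z,t)=\int_0^t\partial h/\partial x_j(z,s)\,ds$ from the Harnack bound, deduce joint continuity via Lemma~\ref{lem:bilipschitz}, and read off periodicity and the Lipschitz property in $\theta$. One small difference worth noting: you obtain the Lipschitz constant by composing the two estimates (for $\partial\varphi/\partial x_j(z,\cdot)$ and for $\tau(z,\cdot)$), whereas the paper gets the sharper constant $\tfrac{2}{1-|z|^2}$ in a single stroke by bounding $\bigl|\int_{t_2}^{t_1}\partial h/\partial x_j\bigr|\le\tfrac{2}{1-|z|^2}\int_{t_2}^{t_1}h=\tfrac{2}{1-|z|^2}|\varphi(z,t_1)-\varphi(z,t_2)|=\tfrac{2}{1-|z|^2}|\theta_1-\theta_2|$, so the composition with $\tau$ is absorbed automatically.
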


\begin{proof}
By Lemma \ref{lem:harnack}, we have 
\[
\left\lvert \frac{\partial h}{\partial x_j} (z, t)\right\rvert \leq \frac{2}{1-\lvert z \rvert^2} h(z, t) \leq 
\frac{2}{1-\lvert z \rvert^2} e^{d_{\hyp}(0,z)},
\]
which implies that, for any compact subset $K \subset \D$, it is bounded from above on $K \times \R$.
Therefore we can change the order of differential and integration to have
\[
\frac{\partial \varphi}{\partial x_j}(z, t) = \frac{\partial}{\partial x_j} \int_0^{t} h(z, s) ds
=\int_0^{t} \frac{\partial h}{\partial x_j} (z, s) ds.
\]
From this expression and Lemma \ref{lem:bilipschitz} it follows that  $\omega_j$ is continuous.

We also have
\begin{align*}
\left\lvert \frac{\partial \varphi}{\partial x_j}(z, t_1) - \frac{\partial \varphi}{\partial x_j}(z, t_2) \right\rvert 
& \leq \int_{t_2}^{t_1} \left\lvert \frac{\partial h}{\partial x_j}(z,s)\right\rvert ds\\
& \leq \frac{2}{1-\lvert z \rvert^2} \int_{t_2}^{t_1} h(z, s) ds\\
&= \frac{2}{1-\lvert z \rvert^2} \lvert\varphi(z, t_1) - \varphi(z, t_2)\rvert
\end{align*}
for any $t_1 \geq t_2$.
Therefore, we conclude that $\omega_j(z, \cdot)$ is Lipschitz function on $\R$:
\[
\left\lvert \omega_j(z, \theta_1) - \omega_j(z, \theta_2) \right\rvert 
\leq \frac{2}{1-\lvert z \rvert^2} \lvert \theta_1 - \theta_2 \rvert.
\]
The periodicity of $\omega_j(z, \cdot)$ is clear. 
\end{proof}

\begin{rem}
We do not know if $\omega_j(\cdot, \theta)$ is locally Lipschitz or not, while it is continuous.
\end{rem}

We will construct a connection on $P$ by taking average of the slope $\omega_j(z, \cdot)$; consider an $S^1$-invariant continuous $1$-form 
\[
\overline{\omega} := d\theta - \sum_{j=1,2}\left( \int_0^{2\pi} \omega_j(z, \theta) \frac{d\theta}{2\pi} \right) dx_j
\]
on $\D \times S^1$.
Since $\overline{\omega}$ is $\Gamma$-invariant, we can identify it with a continuous $1$-form on $P$. This gives a continuous connection on $P$.

Let us show that $\overline{\omega}$ has a curvature form. Define
\begin{align*}
K(z) &:= -\frac{(1-\lvert z \rvert^2)^2}{4}\int_0^{2\pi} \left(-\frac{\partial \omega_1}{\partial \theta}(z, \theta) \omega_2(z, \theta) + \frac{\partial \omega_2}{\partial \theta}(z, \theta) \omega_1(z, \theta) \right) \frac{d\theta}{2\pi},
\end{align*}
and regard the $\G$-invariant form $K(z) \vol(dz)$ as a measurable $2$-form on $\Sigma$. Recall that the hyperbolic volume form is of the form $\frac{4dz \wedge d{\bar{z}}}{(1-|z|^{2})^{2}} \, (z \in \D)$.

\begin{lem} \label{lem:curvature}
$\overline{\omega}$ has the curvature $K(z) \vol(dz)$ in the sense of Definition \ref{def:conconn}.
\end{lem}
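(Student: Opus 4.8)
The plan is to verify Definition~\ref{def:conconn} directly: for a small coordinate rectangle $U$ in $\D$ with piecewise smooth boundary I will compute the holonomy $\hol_{\overline\omega}(\partial U)$ of the connection $\overline\omega$ and compare it with $\exp i \int_U K(z)\vol(dz)$. Since holonomy of an $S^1$-connection $d\theta - \sum_j a_j\, dx_j$ along a loop $\gamma$ bounding $U$ is, formally, $\exp\bigl(i\oint_{\partial U}\sum_j a_j\,dx_j\bigr) = \exp\bigl(i\int_U (\partial_{x_1} a_2 - \partial_{x_2} a_1)\,dx_1\wedge dx_2\bigr)$, the natural candidate for the curvature $2$-form is $\bigl(\partial_{x_1} a_2 - \partial_{x_2} a_1\bigr)\,dx_1\wedge dx_2$ where $a_j(z) := \int_0^{2\pi}\omega_j(z,\theta)\,\frac{d\theta}{2\pi}$. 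The first task is therefore to show this equals $K(z)\vol(dz)$, i.e.\ that $\partial_{x_1} a_2 - \partial_{x_2} a_1 = \tfrac{4}{(1-|z|^2)^2} K(z)$. The obstacle here is that $a_j(\cdot)$ is only known to be continuous (the Remark after Lemma~\ref{lem:bddness} explicitly flags that we do not know $\omega_j(\cdot,\theta)$ is locally Lipschitz in $z$), so $\partial_{x_i} a_j$ need not exist pointwise and Stokes' theorem is not immediately available.

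To get around this, I would work on the foliated chart: on $\widetilde P = \D\times\R$ the leaves are graphs $t = \text{const}$ in the coordinates $(z,t)$, and $\varphi(z,t) = \int_0^t h(z,s)\,ds$ is smooth in $z$ for each fixed $t$ (since $h(\cdot,s)$ is harmonic and the dominated-convergence/differentiation-under-the-integral argument from the proof of Lemma~\ref{lem:bddness} applies). Thus the pullback of $\overline\omega$ to $\D\times\R$ via $(z,t)\mapsto (z,\varphi(z,t))$ is a $1$-form whose coefficients involve $\partial_{x_j}\varphi(z,t)$, which are genuinely smooth in $z$. Concretely, $\omega_j(z,\theta) = \partial_{x_j}\varphi(z,t)\big|_{t=\tau(z,\theta)}$, and changing variables $\theta = \varphi(z,t)$ in the average gives $a_j(z) = \frac{1}{2\pi}\int_0^{2\pi}\partial_{x_j}\varphi(z,t)\,h(z,t)\,dt$ (using $d\theta = h(z,t)\,dt$ and $2\pi$-periodicity of $\mu'_z$). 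Now each integrand is smooth in $z$ with $z$-derivatives locally bounded uniformly in $t$ (again by Harnack, Lemma~\ref{lem:harnack}, bounding $h$ and its $z$-derivatives on compacta), so $a_j$ is in fact $C^1$ in $z$ and we may differentiate under the integral. Computing $\partial_{x_1} a_2 - \partial_{x_2} a_1$ this way, the terms involving $\partial_{x_1}\partial_{x_2}\varphi$ cancel, leaving an expression of the form $\frac{1}{2\pi}\int_0^{2\pi}\bigl(\partial_{x_1}\varphi\,\partial_{x_2}h - \partial_{x_2}\varphi\,\partial_{x_1}h\bigr)\,dt$; translating back via $\theta = \varphi(z,t)$, $\partial_\theta\omega_j = \partial_{x_j}h \cdot (\partial_t\varphi)^{-1}\big|\cdots = \partial_{x_j}h / h$, one recovers exactly the integrand defining $K(z)$, up to the factor $(1-|z|^2)^2/4$ converting $dx_1\wedge dx_2$ to $\vol$. (This also reproves, more cleanly, the continuity and boundedness needed; note $|K(z)|\le 1$ will follow in Step~4 from $|d\log h|_{\hyp}\le 1$.)

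Having identified the curvature $2$-form, the second task is to legitimize the Stokes computation, i.e.\ prove $\hol_{\overline\omega}(\partial U) = \exp i\int_U(\partial_{x_1}a_2-\partial_{x_2}a_1)\,dx_1 dx_2$ for simply-connected $U$ with piecewise smooth boundary. Since $a_j\in C^1(\D)$ by the previous paragraph, the $1$-form $\beta := \sum_j a_j\,dx_j$ is $C^1$, and the holonomy of $\overline\omega = d\theta - \beta$ along a loop is $\exp\bigl(i\oint_{\partial U}\beta\bigr)$ (the parallel transport equation $\dot\theta = \sum_j a_j\dot x_j$ integrates explicitly because the right-hand side does not depend on $\theta$); ordinary Green's theorem on $U$ then yields $\oint_{\partial U}\beta = \int_U d\beta = \int_U(\partial_{x_1}a_2-\partial_{x_2}a_1)\,dx_1 dx_2$. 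Passing to the quotient $P = \G\backslash(\D\times S^1)$ is harmless because $\overline\omega$ is $\G$-invariant and $K(z)\vol(dz)$ descends to $\Sigma$, and any simply-connected $U\subset\Sigma$ with piecewise smooth boundary lifts to such a set in $\D$.

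The main obstacle I anticipate is precisely the regularity gap: showing $a_j$ is $C^1$ in $z$ despite $\omega_j(\cdot,\theta)$ not being known to be Lipschitz in $z$. The resolution is to never differentiate $\omega_j$ in $z$ directly, but instead to pass to the $(z,t)$ foliated coordinates where $\varphi(\cdot,t)$ is harmonic hence smooth, perform all differentiations there (justified by Harnack bounds on $h$ and $\nabla_z h$ uniform on compacta), and only then change variables back to $\theta$. Once $a_j\in C^1$, the remaining steps — Green's theorem, identification of the integrand with $K(z)\vol$, and descent to $\Sigma$ — are routine.
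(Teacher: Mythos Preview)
Your approach is correct and genuinely different from the paper's. The paper fixes $\theta$, tries to apply Stokes to the $1$-form $\omega_1(z,\theta)\,dx_1+\omega_2(z,\theta)\,dx_2$ on $U$, and---because $\omega_j(\cdot,\theta)$ is only known to be continuous in $z$---is forced into a mollifier approximation of $\varphi$ before it can pass to the area integral and then average in $\theta$. You instead swap the order: average in $\theta$ first, then change variables $\theta=\varphi(z,t)$ using that $\varphi(z,0)=0$ and $\varphi(z,2\pi)=\mu'_z(S^1)=2\pi$ make the limits $z$-independent, obtaining $a_j(z)=\tfrac{1}{2\pi}\int_0^{2\pi} h(z,t)\,\partial_{x_j}\varphi(z,t)\,dt$. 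Since the integrand is harmonic in $z$ for a.e.\ $t$ with $z$-derivatives bounded uniformly in $t$ on compacta, $a_j$ is in fact $C^\infty$ and ordinary Green's theorem applies with no approximation. The mixed partials $h\,\partial_{x_1}\partial_{x_2}\varphi$ cancel exactly as you say, and the remaining expression matches $K(z)\vol$ after the same change of variables (your displayed integrand has a sign slip, but that is cosmetic). One small point to tighten: the cancellation step needs bounds on the \emph{second} $z$-derivatives of $h$ (equivalently of $\varphi$), which do not literally come from Lemma~\ref{lem:harnack}; they follow from standard interior estimates for harmonic functions together with the Harnack bound $h(z,t)\le e^{d_{\hyp}(0,z)}$. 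Your route is more elementary and, as a bonus, shows that the base coefficients of $\overline\omega$ are smooth rather than merely continuous---something the paper's argument does not yield.
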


\begin{proof}
Take a piecewise smooth simple closed curve $\gamma: [0,1] \to \D$. Let $U$ be the domain bounded by $\gamma$. 
It is enough to show
\[
\hol_{\overline{\omega}}(\gamma) = \exp i\int_U K(z) \vol(dz)
\]
assuming that $\gamma$ is smooth.
Consider the horizontal lift $\tilde{\gamma}: [0,1] \to \D \times \R$ of $\gamma$ to $\widetilde{P}$ with respect to $\bar{\omega}$ whose initial value is $\tilde{\gamma}(0)=0$.
For $\exp i\tilde{\gamma}(1) = \hol_{\overline{\omega}}(\gamma)$, we need to show $\tilde{\gamma}(1) =  \int_U K(z) \vol(dz)$.

Since $\tilde{\gamma}$ is a solution of an ordinary differential equation
\[
\frac{d \tilde{\gamma}}{du}(u) = \sum_{j=1,2} \left(\int_0^{2\pi} \omega_j(\gamma(u), \theta)\frac{d\theta}{2\pi}\right) \frac{d \gamma_j}{du}(u)
\]
and the right hand side does not contain $\tilde{\gamma}$, we can integrate it to compute the solution:
\begin{align}
\tilde{\gamma}(1) 
& =  \int_0^{1}  du  \sum_{j=1,2} \left(\int_0^{2\pi} \omega_j(\gamma(u), \theta)\frac{d\theta}{2\pi}\right) \frac{d \gamma_j}{du}(u) \notag \\
& = \int_0^{2\pi} \frac{d\theta}{2\pi}   \int_\gamma \omega_1(z, \theta) dx_1 +  \omega_2(z, \theta) dx_2.
\label{eq:holonomy}\end{align}
We would like to apply the Stokes theorem to compute the path integral, but the difficulty here is that $\omega_j(z,\theta)$'s are merely continuous in $z$.
We need to approximate $\varphi(z,t)$ by a sequence of smooth functions.
By using mollifiers, we can construct smooth functions $\{ f_n(z,t)\}$  on $\overline{U} \times [0,2\pi]$ such that 
\begin{align*}
&\sup_{\overline{U} \times [0,2\pi]} \lvert f_n - \varphi \rvert 
+ \sum_{j=1,2} \sup_{\overline{U} \times [0,2\pi]} \left\lvert \frac{\partial f_n}{\partial x_j} - \frac{\partial \varphi}{\partial x_j}\right\rvert \to 0,\\
&\sum_{j=1,2} \int_{\overline{U} \times [0,2\pi]} \left\lvert \frac{\partial^2 f_n}{\partial t \partial x_j} - \frac{\partial^2 \varphi}{\partial t \partial x_j}\right\rvert dx_1 dx_2 dt\to 0
\end{align*}
as $n \to \infty$. Note that $\varphi(z,t)$ and $\frac{\partial \varphi}{\partial x_j}(z,t)$ are continuous, and $\frac{\partial \varphi}{\partial t}(z,t) = h(z,t)$ and $\frac{\partial^2 \varphi}{\partial t \partial x_j}(z,t) = \frac{\partial h}{\partial x_j}(z,t)$ are bounded measurable and integrable from Lemma \ref{lem:harnack} and \ref{lem:bddness}.
Moreover, from Lemma \ref{lem:bilipschitz},
\[
\int_{\overline{U} \times [0,2\pi]} \left\lvert \frac{\partial^2 f_n}{\partial t \partial x_j}(z,\tau(z,\theta)) - \frac{\partial^2 \varphi}{\partial t \partial x_j}(z,\tau(z,\theta)) \right\rvert dx_1 dx_2 d\theta\to 0
\]
follows. Hence, Fubini's theorem  yields that, for a.e.\ $\theta$,
\begin{equation} \label{approximation}
\int_{\overline{U}} \left\lvert \frac{\partial^2 f_n}{\partial t \partial x_j}(z,\tau(z,\theta)) - \frac{\partial^2 \varphi}{\partial t \partial x_j}(z,\tau(z,\theta)) \right\rvert dx_1 dx_2 \to 0.
\end{equation}

Using this sequence, we can approximate 
\[
\int_\gamma \omega_1(z, \theta) dx_1 +  \omega_2(z, \theta) dx_2
 = \lim_{n\to \infty} \int_\gamma \frac{\partial f_n}{\partial x_1}(z, \tau(z,\theta)) dx_1 +  \frac{\partial f_n}{\partial x_2}(z, \tau(z,\theta)) dx_2.
\]
From Lemma \ref{lem:bilipschitz}, the integrand in the approximation sequence is a Lipschitz $1$-form and we can apply the Stokes theorem for it. 
Note that we can show the Stokes theorem for Lipschitz $1$-forms by using that fact that the second fundamental theorem of calculus holds for Lipschitz functions (see, e.g., \cite[Section 1.6.4]{Tao}). Note also that the Stokes theorem holds in more general situations (see \cite[Theorem 4.5.6]{Federer}).

It follows that 
 \begin{align*}
 &\int_\gamma \frac{\partial f_n}{\partial x_1}(z, \tau(z,\theta)) dx_1 +  \frac{\partial f_n}{\partial x_2}(z, \tau(z,\theta)) dx_2\\
= & \int_U \left( -\frac{\partial}{\partial x_2} \frac{\partial f_n}{\partial x_1}(z, \tau(z,\theta)) + \frac{\partial}{\partial x_1} \frac{\partial f_n}{\partial x_2}(z, \tau(z,\theta)) \right)dx_1 dx_2\\
 = &  \int_U \left( -\frac{\partial^2 f_n}{\partial t \partial x_1}(z, \tau(z,\theta)) \frac{\partial \tau}{\partial x_2}(z, \theta) + \frac{\partial^2 f_n}{\partial t \partial x_2}(z, \tau(z,\theta)) \frac{\partial \tau}{\partial x_1}(z,\theta) \right)dx_1 dx_2.
\end{align*}
Notice that $\frac{\partial^2 f_n}{\partial x_1 \partial x_2}$ is canceled out. 
By letting $n \to \infty$, \eqref{approximation} implies
\begin{align*}
&\int_\gamma \omega_1(z, \theta) dx_1 +  \omega_2(z, \theta) dx_2 \\
&= \int_U \left( -\frac{\partial^2 \varphi}{\partial t \partial x_1}(z, \tau(z,\theta)) \frac{\partial \tau}{\partial x_2}(z, \theta) + \frac{\partial^2 \varphi}{\partial t \partial x_2}(z, \tau(z,\theta)) \frac{\partial \tau}{\partial x_1}(z,\theta) \right)dx_1 dx_2
\end{align*}
for a.e.\ $\theta$. On the other hand, thanks to Lemma \ref{lem:bilipschitz}, we can differentiate $\varphi(z, \tau(z,\theta)) = \theta$ by the chain rule and see that
\[
\frac{\partial \varphi}{\partial x_j}(z, \tau(z, \theta)) + \frac{\partial \varphi}{\partial t}(z, \tau(z, \theta)) \frac{\partial \tau}{\partial x_j}(z, \theta) = 0
\]
holds for a.e.\ $z$ and all $\theta$. Also,
\[
\frac{\partial \varphi}{\partial t}(z, \tau(z, \theta)) \frac{\partial \tau}{\partial \theta}(z, \theta) = 1
\]
is true for a.e.\ $\theta$ and all $z$. Hence, it follows that, for a.e.\ $\theta$ and a.e.\ $z$, 
\[
 \frac{\partial \tau}{\partial x_j}(z, \theta)
  = -\omega_j(z, \theta) \frac{\partial \tau}{\partial \theta}(z, \theta)
\]
holds, and we have
\begin{align*}
 &\int_\gamma \omega_1(z, \theta) dx_1 +  \omega_2(z, \theta) dx_2\\
& =  \int_U \left(\frac{\partial \omega_1}{\partial \theta}(z, \theta) \omega_2(z, \theta) - \frac{\partial \omega_2}{\partial \theta}(z, \theta) \omega_1(z, \theta) \right)dx_1 dx_2
\end{align*}
for a.e.\ $\theta$. Therefore, by changing the order of integrals in \eqref{eq:holonomy}, we have $\tilde{\gamma}(1) =\int_U  K(z) \vol(dz)$.
\end{proof}

Applying the isoperimetric inequality and Harnack's inequality, we show that $\lvert K(z) \rvert$ is bounded by one everywhere.

\begin{claim}
\label{inequality}
For every $z \in \D$, we have $\lvert K(z) \rvert \leq 1$. 
\end{claim}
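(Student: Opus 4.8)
The plan is to recognize $K(z)$ as a constant multiple of the algebraic area enclosed by the closed planar curve $c_z\colon\theta\mapsto(\omega_1(z,\theta),\omega_2(z,\theta))$, and then to bound that area by the isoperimetric inequality, the length of $c_z$ being controlled by Harnack's inequality.

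First I would fix $z\in\D$ and note that the integrand in the definition of $K(z)$ is already in ``area form'': $-\frac{\partial\omega_1}{\partial\theta}\,\omega_2+\frac{\partial\omega_2}{\partial\theta}\,\omega_1=\omega_1\frac{\partial\omega_2}{\partial\theta}-\omega_2\frac{\partial\omega_1}{\partial\theta}$. By Lemma \ref{lem:bddness} each $\omega_j(z,\cdot)$ is Lipschitz and $2\pi$-periodic, so $c_z$ is a closed rectifiable loop, the line integral below is well defined, and one may write
\[
K(z)=-\frac{(1-\lvert z\rvert^{2})^{2}}{4\pi}\,A(z),\qquad A(z):=\frac12\int_{0}^{2\pi}\bigl(\omega_1\,d\omega_2-\omega_2\,d\omega_1\bigr),
\]
where $A(z)$ is the algebraic area (signed, counted with multiplicity) enclosed by $c_z$. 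Thus it suffices to prove $\lvert A(z)\rvert\le 4\pi/(1-\lvert z\rvert^{2})^{2}$.

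Next I would estimate $\operatorname{length}(c_z)$. Differentiating $\omega_j(z,\theta)=\varphi_{x_j}(z,\tau(z,\theta))$ by the chain rule --- legitimate for a.e.\ $\theta$ because $\tau(z,\cdot)$ is bi-Lipschitz (Lemma \ref{lem:bilipschitz}), $\partial_t\varphi_{x_j}=h_{x_j}$ a.e.\ (as in the proof of Lemma \ref{lem:bddness}), and $\partial_\theta\tau(z,\theta)=1/h(z,\tau(z,\theta))$ --- one obtains $\frac{\partial\omega_j}{\partial\theta}(z,\theta)=(\partial_{x_j}\log h)(z,\tau(z,\theta))$ for a.e.\ $\theta$. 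Substituting $\theta=\varphi(z,t)$, so that $d\theta=h(z,t)\,dt$ and the new variable $t$ runs over one period, and applying Harnack's inequality (Lemma \ref{lem:harnack}(2)) in the form $(\partial_{x_1}\log h)^{2}+(\partial_{x_2}\log h)^{2}\le 4/(1-\lvert z\rvert^{2})^{2}$ --- valid for a.e.\ $t$ since $h(\cdot,t)$ is positive harmonic --- together with the normalization $\int_{0}^{2\pi}h(z,t)\,dt=2\pi$, I would get
\[
\operatorname{length}(c_z)=\int_{0}^{2\pi}\sqrt{\Bigl(\tfrac{\partial\omega_1}{\partial\theta}\Bigr)^{2}+\Bigl(\tfrac{\partial\omega_2}{\partial\theta}\Bigr)^{2}}\;d\theta=\int\sqrt{(\partial_{x_1}\log h)^{2}+(\partial_{x_2}\log h)^{2}}\;h(z,t)\,dt\le\frac{4\pi}{1-\lvert z\rvert^{2}}.
\]
Then the isoperimetric inequality in its signed-area form --- for a closed rectifiable curve of length $L$ in $\R^{2}$ the enclosed algebraic area satisfies $\lvert A\rvert\le L^{2}/(4\pi)$, which is Hurwitz's Fourier-series argument and holds for the algebraic area even when the curve is not simple --- gives $\lvert A(z)\rvert\le 4\pi/(1-\lvert z\rvert^{2})^{2}$, hence $\lvert K(z)\rvert=\frac{(1-\lvert z\rvert^{2})^{2}}{4\pi}\lvert A(z)\rvert\le 1$.

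The step I expect to require the most care is the second one: since $\omega_j(\cdot,\theta)$ is a priori only continuous (cf.\ the Remark after Lemma \ref{lem:bddness}), one must check that the identity $\frac{\partial\omega_j}{\partial\theta}=(\partial_{x_j}\log h)\circ(\mathrm{id},\tau)$ and the change of variables $\theta\leftrightarrow t$ are valid as genuine almost-everywhere statements --- this is exactly where the bi-Lipschitz conclusion of Lemma \ref{lem:bilipschitz} enters, as it guarantees that null sets are preserved in both directions. The remaining manipulations are routine.
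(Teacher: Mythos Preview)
Your proof is correct and follows the same strategy as the paper's: interpret $K(z)$ as $-\frac{(1-|z|^2)^2}{4\pi}$ times the signed area enclosed by the closed Lipschitz curve $\theta\mapsto(\omega_1(z,\theta),\omega_2(z,\theta))$, compute the speed of this curve as $\partial_\theta\omega_j=(\partial_{x_j}\log h)(z,\tau(z,\theta))$, bound the length via Harnack's inequality, and conclude by the isoperimetric inequality. The only difference is that your change of variables $\theta=\varphi(z,t)$ in the length computation is an unnecessary detour: since $\lvert d\log h\rvert_{\hyp}\le 1$ already gives $\sqrt{(\partial_{x_1}\log h)^2+(\partial_{x_2}\log h)^2}\le 2/(1-|z|^2)$ pointwise, one can bound $\int_0^{2\pi}\bigl|\partial_\theta c_z\bigr|\,d\theta\le 4\pi/(1-|z|^2)$ directly in the $\theta$ variable without invoking the normalization $\int_0^{2\pi}h(z,t)\,dt=2\pi$.
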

\begin{proof}
Among the definition of $K(z)$,
\[
\int_0^{2\pi} \frac{1}{2}\left(-\frac{\partial \omega_1}{\partial \theta}(z, \theta) \omega_2(z, \theta) + \frac{\partial \omega_2}{\partial \theta}(z, \theta) \omega_1(z, \theta) \right) d\theta
\]
is the signed area of the domain bounded by a Lipschitz closed curve $(\omega_1(z,\theta), \omega_2(z,\theta))$ $(0 \leq \theta \leq 2\pi)$ on $\R^2$. The tangent vector of this curve is defined for a.e.\ $\theta$ and it is 
\begin{align*}
\frac{\partial}{\partial \theta}\omega_j(z, \theta) &= \frac{\partial}{\partial\theta}\int_0^{\tau(z,\theta)} \frac{\partial h}{\partial x_j}(z,t) dt = \frac{\partial \tau}{\partial \theta}(z, \theta) \frac{\partial h}{\partial x_j}(z, \tau(z,\theta))\\
&= \left(\frac{\partial \varphi}{\partial t}(z, \tau(z,\theta))\right)^{-1} \frac{\partial h}{\partial x_j}(z,\tau(z,\theta)) = \frac{\partial \log h}{\partial x_j}(z, \tau(z,\theta)).
\end{align*}
Then, by the isoperimetric inequality, we have
\begin{multline*}
\lvert K \rvert \leq \frac{(1-\lvert z \rvert^2)^2}{4\pi} \cdot \frac{1}{4\pi} \left(\int_0^{2\pi} \sqrt{\left(\frac{\partial \log h}{\partial x_1}\right)^2 + \left(\frac{\partial \log h}{\partial x_2}\right)^2} d\theta\right)^2 \\
= \frac{1}{4\pi^2} \left(\int_0^{2\pi} \lvert d \log h \rvert_{\hyp} d\theta\right)^2,
\end{multline*}
which implies $\lvert K \rvert \leq (2\pi)^2/{4\pi^2} = 1$ by Harnack's inequality. Note that the factor $\frac{(1-\lvert z \rvert^2)^2}{4}$ comes from the Poincar\'e metric on $\D$ (see \eqref{eq:ghyp}).
\end{proof}
This completes the proof for the first part of Theorem \ref{thm:GB}.

\subsection{Step 4: The translation number of the holonomy along horocircles}\label{subs:tr}
We will show the remaining part of Theorem \ref{thm:GB}, namely, the Gauss--Bonnet formula
$e(\rho) = \frac{1}{2\pi} \int_{\Sigma} K(z) \vol (dz)$.
Let $\rho'$ be the $\G$-action on $S^1$ obtained in Step 1 (Section \ref{subsect:semiconj}) by collapsing the complement of the harmonic measure $\mu$. The suspension foliation of $\rho'$ has the harmonic measure induced from $\mu$, which was denoted by $\mu'$. Let us consider the $S^1$-connection $\bar\omega$ on the suspension bundle of $\rho'$ constructed in Step 3 (Section \ref{subsect:curvature-est}), which is the fiberwise $\mu'$-average of the flat connection. Since the Euler class in the bounded cohomology is invariant under semiconjugacy due to an observation of Ghys \cite{Ghys} (see also \cite[Theorem 4.3]{BFH}), we have $e(\rho) = e(\rho')$. Therefore, in order to prove Theorem \ref{thm:GB}, it suffices to show
\begin{equation}\label{eq:GB34}
e(\rho') = \frac{1}{2\pi} \int_{\Sigma} K(z) \vol (dz).
\end{equation}

This is well-known when $\Sigma$ is a closed surface. We discuss the case where $\Sigma$ has cusps. 
Take a homomorphism lift $\tilde\rho' : \Gamma \to \tHomeo$ of $\rho'$. Then, by \cite{BIW}, we have
\begin{equation}\label{eq:tr1}
e(\rho') = - \sum_{i=1}^{m}\tau (\tilde\rho'(c_i)),
\end{equation}
where $\tau : \tHomeo \to \R$ is the translation number and $c_i \in \Gamma$ corresponds to a loop that goes around the $i$-th cusp of $\Sigma$ for $i=1, \dots, m$. 
A neighborhood of each cusp is foliated by closed horocircles. For each $i$, let $\{c_i^s\}_{s \gg 0}$ be the family of horocircles in a neighborhood of the $i$-th cusp so that
the hyperbolic diameter $\delta^s_i$ of $c_i^s$ tends to 0   
and $c_i^s$ approaches to the $i$-th cusp as $s \to \infty$. 
For $s \gg 0$, let $\Sigma^s$ be the compact subsurface of $\Sigma$ bounded by $c_1^s, \dots, c_m^s$. 
We denote by $\sigma$ the global section of $P \to \Sigma$ that corresponds to the choice of the lift $\tilde\rho'$ of ${\rho'}$.
For $s \gg 0$, by Proposition \ref{prop:GB}, we have
\begin{equation}\label{eq:tr2}
 \frac{1}{2\pi} \int_{\Sigma^s}K(z) \vol (dz)= \sum_{i=1}^{m} \tau(\widetilde{\hol}_{\bar\omega}(c_i^s)),
\end{equation}
where $\widetilde{\hol}_{\bar\omega}(c_i^s) : \R \to \R$ is the lift of the holonomy map of $\bar\omega$ along $c_i^s$ with respect to $\sigma$. Since $\Gamma \backslash \D$ has finite volume and $\lvert K(z) \rvert \leq 1$ a.e.\ $z$ from Claim \ref{inequality}, we have  $\int_{\Sigma^s} K(z) \vol(dz) \to \int_\Sigma K(z) \vol(dz)$ as $s \to \infty$. 
Therefore, by \eqref{eq:tr1} and \eqref{eq:tr2}, for the proof of the Gauss--Bonnet formula \eqref{eq:GB34}, it remains to find a sequence $\{s_n\}$ such that 
\begin{equation}\label{eq:eachcusp}
\tau(\widetilde\hol_{\bar\omega}(c_i^{s_n})) \to -\tau (\tilde\rho'(c_i))
\end{equation}
as $n \to \infty$ for each fixed $i$. 

Take a lift of $c_i^s$ on $\D$, $\gamma^s = \gamma^s_1 + i\gamma^s_2: [0,1] \to \D$, so that it is continuous in $s$. 
We use the coordinate $(z, t)$ and $(z, \theta)$ of $\D \times \R$ used in Section \ref{subsect:curvature-est} for the pull back bundle of $\Sigma \times_{\tilde{\rho}'} \R$ over $\D$,
and write $\widetilde{P}$ for $\D \times \R$ equipped with $(z,\theta)$-coordinate. Denote by $\tilde{\sigma}$ a lift of $\sigma$ to a global section of $\D \times \R \to \D$, and express $\tilde{\sigma}$ as the graph of a smooth function $\tilde{\sigma}: \D \to \R$ in $(z, \theta)$-coordinate.

Since $(\gamma^s(0), \widetilde{\sigma}(\gamma^s(0))), (\gamma^s(1), \widetilde{\sigma}(\gamma^s(1))) \in \widetilde{P}$ are identified in $\Sigma \times_{\tilde{\rho}'} \R$, it follows from \eqref{eq:holonomy} that
\[
2\pi \tau(\widetilde\hol_{\overline\omega}(c_i^s)) = \int_0^{2\pi} \frac{d\theta}{2\pi} \int_{\gamma^s} (\omega_1(z, \theta) dx_1 + \omega_2(z, \theta) dx_2) + \tilde{\sigma}(\gamma^s(0)) - \tilde{\sigma}(\gamma^s(1)).
\]
We set $H(s)$ the first term in RHS. 
We use the locally bi-Lipschitz homeomorphism $\D \times \R \to \widetilde{P}$,
\[
(z, t) \mapsto (z, \theta), \quad \theta = \varphi(z,t) = \int_0^t h(z, s) ds
\]
considered in Lemma \ref{lem:bilipschitz}
to change variables of the integration, which is justified thanks to Rademacher's theorem (see, e.g., \cite[Theorem 3.2.3]{Federer}).
It follows that
\begin{align*}
H(s)&= \int_{[0,1] \times [0,2\pi]} \left( \omega_1(\gamma^s(u),\theta) \frac {d\gamma^s_1}{du}(u)+ \omega_2(\gamma^s(u), \theta) \frac {d\gamma^s_2}{du}(u) \right) \frac{du d\theta}{2\pi}\\
&= \int_{[0,1] \times [0,2\pi]} \left( \frac{\partial\varphi}{\partial x_1}(\gamma^s(u), t) \frac{d \gamma^s_1}{du}(u) + \frac{\partial\varphi}{\partial x_2}(\gamma^s(u), t) \frac{d \gamma^s_2}{du}(u) \right) h(\gamma^s(u),t)\frac{du dt}{2\pi}.
\end{align*}
Harnack's inequality implies that 
\begin{equation}  \label{eq:cusp-harnack}
e^{-\delta^s_i} h(\gamma^s(0), t) \leq h(\gamma^s(u), t) \leq e^{\delta^s_i} h(\gamma^s(0), t)
\end{equation}
for any $u \in [0,1]$. So, we may approximate $H(s)$ by
\[
H_0(s) 
 := \int_{[0,1] \times [0,2\pi]} \left( \frac{\partial\varphi}{\partial x_1}(\gamma^s(u), t) \frac{d\gamma^s_1}{du}(u) + \frac{\partial\varphi}{\partial x_2}(\gamma^s(u), t) \frac{d\gamma^s_2}{du}(u) \right) h(\gamma^s(0),t)\frac{du dt}{2\pi}.
\]
We can rewrite
\begin{align*} 
H_0(s) 
& = \int_0^{2\pi}\frac{h(\gamma^s(0), t)dt}{2\pi} \int_0^1 \left( \frac{\partial\varphi}{\partial x_1}(\gamma^s(u), t) \frac{d\gamma^s_1}{du}(u) + \frac{\partial\varphi}{\partial x_2}(\gamma^s(u), t) \frac{d\gamma^s_2}{du}(u) \right) du\\
&
=  \int_0^{2\pi} \left( \varphi(\gamma^s(1), t) - \varphi(\gamma^s(0), t) \right) \frac{\mu'_{\gamma^s(0)}(dt)}{2\pi}, 
\end{align*}
where we used the notation $\mu'_z = h(z,t) \lambda(t)$.
Since $(\gamma^s(0), t), (\gamma^s(1), \tilde{\rho}'(c_i)(t)) \in \D \times \R$ are identified in $\Sigma \times_{\tilde{\rho}'} \R$, 
\[
\varphi(\gamma^s(1), t) - \tilde{\sigma}(\gamma^s(1))
= \varphi(\gamma^s(0), \tilde{\rho}'(c_i)^{-1}(t)) - \tilde{\sigma}(\gamma^s(0)).
\]
Thus, 
\begin{align}\label{eq:cusp-measure}
\begin{split}
& H_0(s) + \tilde{\sigma}(\gamma^s(0)) - \tilde{\sigma}(\gamma^s(1))\\
& = \int_0^{2\pi} \left( \varphi(\gamma^s(0), \tilde{\rho}'(c_i)^{-1}(t)) - \varphi(\gamma^s(0), t) \right) \frac{\mu'_{\gamma^s(0)}(dt)}{2\pi} \\
& = \int_0^{2\pi} \frac{\mu'_{\gamma^s(0)}(dt)}{2\pi} \int_t^{\tilde{\rho}'(c_i)^{-1}(t)} h(\gamma^s(0), u) du\\
& = \frac{1}{2\pi} \left( \mu'_{\gamma^s(0)} \otimes \mu'_{\gamma^s(0)}(E_i^+) - \mu'_{\gamma^s(0)} \otimes \mu'_{\gamma^s(0)}(E_i^-) \right),
\end{split}
\end{align}
where
\begin{align*}
E_i^+ &:= \{ (t, u) \in [0,2\pi] \times \R \mid t \leq u \leq \tilde{\rho}'(c_i)^{-1}(t) \},\\
E_i^- &:= \{ (t, u) \in [0,2\pi] \times \R \mid \tilde{\rho}'(c_i)^{-1}(t) \leq u \leq t \}.
\end{align*}

Since $\{ \mu'_{\gamma^s(0)}/2\pi \}_{s \gg 0}$ can be seen as a family of probability measures on $S^1$, we can extract a weakly convergent subsequence, say,
$\{ \mu'_{\gamma^{s_n}(0)}/2\pi \}_{n \in \N}$.
We denote its weak limit by $\nu$. We show that $\nu$ is $\tilde{\rho}'(c_i)$-invariant:
From the $\Gamma$-invariance of the harmonic measure as in \eqref{eq:fibermeas},
\[
\mu'_{\gamma^{s_n}(1)} = \tilde{\rho}'(c_i)_* \mu'_{\gamma^{s_n}(0)}.
\]
For any Borel subset $B$ in $\R$, \eqref{eq:cusp-harnack} implies that
\[
\lvert \mu'_{\gamma^{s_n}(1)}(B) -  \mu'_{\gamma^{s_n}(0)}(B)\rvert \leq \max\{ (e^{\delta^{s_n}_i} - 1), (1 - e^{-\delta^{s_n}_i}) \} \mu'_{\gamma^{s_n}(0)}(B).
\]
By letting $n \to \infty$, we conclude that $\nu(B) = \tilde{\rho}'(c_i)_* \nu(B)$.

Now we apply \eqref{eq:cusp-harnack} and \eqref{eq:cusp-measure} for $s = s_n$. Taking $n \to \infty$ yields
\begin{align*}
\lim_{n \to \infty} \tau(\widetilde\hol_{\overline\omega}(c_i^{s_n})) 
& = \frac{1}{2\pi} \lim_{n \to \infty} (H_0(s_n) + \tilde{\sigma}(\gamma^{s_n}(0)) - \tilde{\sigma}(\gamma^{s_n}(1)))\\
& =  \nu \otimes \nu(E_i^+) - \nu \otimes \nu(E_i^-).
\end{align*}
Since $\nu$ is $\tilde{\rho}'(c_i)$-invariant,
\[
\tau_\nu(\tilde{\rho}'(c_i)^{-1}) :=
\begin{cases}
\nu([t, \tilde{\rho}'(c_i)^{-1}(t)]) & \text{if $t \leq \tilde{\rho}'(c_i)^{-1}(t)$,}\\
-\nu(\tilde{\rho}'(c_i)^{-1}(t), t]) & \text{if $\tilde{\rho}'(c_i)^{-1}(t) \leq t$}\\
\end{cases}
\]
is independent of the choice of $t \in \R$. Hence,
\[
\lim_{n \to \infty} \tau(\widetilde\hol_{\overline\omega}(c_i^{s_n})) 
=  \int_0^{2\pi} \tau_\nu(\tilde{\rho}'(c_i)^{-1}) \nu(dt) = \tau_\nu(\tilde{\rho}'(c_i)^{-1}).
\]
This coincides with the translation number $-\tau(\tilde{\rho}'(c_i))$, which implies \eqref{eq:eachcusp}.  
The proof is completed.

\section{An application to rigidity theory}

Let us prove Corollary \ref{cor:mat} and Theorem \ref{thm:Poi} in this section. We first show the inequality \eqref{eq:MW}. If $e(\rho)=0$, then it is clear that the inequality \eqref{eq:MW} strictly holds. Thus let us assume that $e(\rho) \neq 0$. In that case, it is well known that $\rho (\Gamma)$ has no finite orbit in $S^1$. We can apply the construction in Section \ref{subsect:semiconj} and obtain a representation $\rho' \colon \Gamma \to \Homeo$ which is semiconjugate to $\rho$. We have $e(\rho) = e(\rho')$ as in Section \ref{subs:tr}.

We employ the continuous connection form constructed in Section \ref{subsect:curvature-est}. Then, from Theorem \ref{thm:GB}, we have
\begin{equation}\label{eq:MW-proof}
\lvert e(\rho')\rvert = \left\lvert \frac{1}{2\pi} \int_\Sigma K(z) \vol(dz)\right\rvert  \leq \frac{1}{2\pi} \int_\Sigma \lvert K(z) \rvert \vol(dz) \leq \frac{1}{2\pi}\vol(\Sigma) = \lvert e(\Sigma)\rvert,
\end{equation}
which shows the Milnor--Wood type inequality \eqref{eq:MW}.

Next, we show the rigidity statement of Corollary \ref{cor:mat}, 
and Theorem \ref{thm:Poi}.
For simplicity, we consider the case where $e(\rho) = e(\Sigma)$. 
The proof is similar in the other case where $e(\rho) = -e(\Sigma)$.
Since the equality holds in \eqref{eq:MW-proof}, we have $K(z) = -1$ for a.e.\ $z$. We fix $z_0 \in \Sigma$ where $K(z_0) = -1$. 

We use the notations in the proof of Claim \ref{inequality}. Recall that two inequalities, the isoperimetric inequality and Harnack's inequality, are involved in this proof:
\begin{align*}
K(z) & = -\frac{(1-\lvert z \rvert^2)^2}{4\pi} \int_0^{2\pi} \frac{1}{2}\left(-\frac{\partial \omega_1}{\partial \theta}(z, \theta) \omega_2(z, \theta) + \frac{\partial \omega_2}{\partial \theta}(z, \theta) \omega_1(z, \theta) \right) {d\theta}\\
&\geq -\frac{(1-\lvert z \rvert^2)^2}{4\pi} \cdot \frac{1}{4\pi} \left(\int_0^{2\pi} \sqrt{\left(\frac{\partial \log h}{\partial x_1}\right)^2 + \left(\frac{\partial \log h}{\partial x_2}\right)^2} d\theta\right)^2 \\
&= -\frac{1}{4\pi^2} \left(\int_0^{2\pi} \lvert d \log h \rvert_{\hyp} d\theta\right)^2\\
& \geq -1.
\end{align*}
From $K(z_0) = -1$, the equality holds in the isoperimetric inequality for the Lipschitz curve $(\omega_1(z_0,\theta), \omega_2(z_0,\theta))$ and this curve must be a round circle of radius $2/(1-\lvert z_0 \rvert^2)$ that is positively oriented. 
Also, the equality holds in Harnack's inequality for a.e.\ $\theta$ and the curve must have a constant speed.
Therefore, for some $\alpha(z_0) \in \R$, we have
\[
(h_{x_1}, h_{x_2})(z_0, \tau(z_0,\theta)) = \frac{2h(z_0, \tau(z_0,\theta))}{1- \lvert z_0 \rvert^2} (\cos (\theta + \alpha(z_0)), \sin(\theta +\alpha(z_0))),
\]
and hence
\[
\frac{h_{x_1}(z_0, t) + ih_{x_2}(z_0, t)}{\lvert h_{x_1}(z_0, t) + ih_{x_2}(z_0, t)\rvert} = e^{i( \varphi(z_0,t) + \alpha(z_0))}.
\]

From this and Lemma \ref{lem:harnack}, we have shown that the harmonic measure is of the form $h(z,t) \vol (z) \lambda (t)$, where
\begin{equation}\label{eq:harmonicmeas}
h(z,t) =  \frac{1-\lvert z \rvert^2}{\lvert m(e^{it})-z \rvert^2}
\end{equation}
for a.e.\ $t$ with
\begin{align*}
m(e^{it}) 
:= \frac{e^{i(\varphi(z_0,t) + \alpha(z_0))}+ z_0}{1 + \overline{z_0}e^{i(\varphi(z_0,t) + \alpha(z_0))}} \in S^1.
\end{align*}
Note that, by \eqref{eq:harmonicmeas}, for a.e.\ $t$, $m(e^{it})$ is the unique point in $S^1$ such that $h(z,t) \to +\infty$ as $z \to m(e^{it})$, which is independent of the choice of $z_0$.

The following claim finishes the proof of Corollary \ref{cor:mat} since $\rho$ is semiconjugate to $\rho'$ by $\psi$.

\begin{claim}\label{claim:equiv}
$m \colon S^1 \to S^1$ is a $(\rho', \rho_0)$-equivariant orientation-preserving homeomorphism, which conjugates $\rho'$ to $\rho_0$.
\end{claim}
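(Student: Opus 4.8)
The plan is to establish the three asserted properties of $m$ in turn: that it is well-defined as a map $S^1 \to S^1$ (i.e. the formula \eqref{eq:harmonicmeas} genuinely determines a value $m(e^{it})$ for \emph{every} $t$, not just a.e.\ $t$), that it is an orientation-preserving homeomorphism, and that it intertwines $\rho'$ and $\rho_0$. First I would upgrade the ``a.e.'' statement to a pointwise one. Since $h(z,\cdot)\lambda$ is the fiber measure $\mu'_z$ which we know is non-atomic with full support (from Step 1 and Step 2), the function $\varphi(z,t)=\int_0^t h(z,s)\,ds$ is a genuine homeomorphism of $\R$ for every $z$, so the formula $m(e^{it}) = (e^{i(\varphi(z_0,t)+\alpha(z_0))}+z_0)/(1+\overline{z_0}e^{i(\varphi(z_0,t)+\alpha(z_0))})$ makes sense for every $t$ and depends on $t$ only through $\varphi(z_0,t)$. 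Because $\varphi(z_0,\cdot)\colon\R\to\R$ is a monotone homeomorphism commuting with $2\pi\Z$-translation, the map $e^{it}\mapsto e^{i\varphi(z_0,t)}$ is an orientation-preserving homeomorphism of $S^1$; composing with the Möbius map $w\mapsto (w+z_0)/(1+\overline{z_0}w)$ (a rotation-like element of $\operatorname{PSU}(1,1)$ restricted to $S^1$, hence an orientation-preserving homeomorphism of $S^1$), we get that $m$ is an orientation-preserving homeomorphism of $S^1$. I would also record the intrinsic characterization already noted in the text: for every $t$, $m(e^{it})$ is the unique boundary point where $h(\cdot,t)$ blows up, which makes $m$ independent of the auxiliary choices $z_0,\alpha(z_0)$ and will make the equivariance computation clean.

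Next I would prove the equivariance $m\circ\rho'(\gamma) = \rho_0(\gamma)\circ m$ for all $\gamma\in\G$. The key input is the $\G$-invariance of $\tilde\mu'$, equivalently $\mu'_{\gamma z} = \rho'(\gamma)_*\mu'_z$, which disintegrates (cf.\ the transformation used in Step 2) into the cocycle identity $h(\gamma z, \rho'(\gamma)t)\cdot|(\rho'(\gamma))'(t)| = h(z,t)$ up to the leafwise-constant ambiguity; more robustly, I would argue directly with the blow-up characterization. Fix $t$ and $\gamma$. The leaf through $(z,t)$ in $\D\times\R$ is carried by $\gamma$ to the leaf through $(\gamma z,\rho'(\gamma)t)$, and $\gamma$ acts on $\D$ as the isometry $\rho_0(\gamma)\in\operatorname{PSU}(1,1)$, which extends to $S^1$ as $\rho_0(\gamma)$. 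Since $h(\cdot,t)$ blows up exactly at $m(e^{it})\in S^1$ and $h(\cdot,\rho'(\gamma)t)$ is, up to a leafwise positive constant, the pushforward $h(\rho_0(\gamma)^{-1}\cdot,t)$ composed with the Poisson-kernel transformation rule, its blow-up point is $\rho_0(\gamma)(m(e^{it}))$. By uniqueness of the blow-up point, $m(\rho'(\gamma)e^{it}) = \rho_0(\gamma)m(e^{it})$, which is the claim. I would write this last step carefully using the explicit transformation of the Poisson kernel under $\operatorname{PSU}(1,1)$: if $h(z) = (1-|z|^2)/|p-z|^2$ for $p\in S^1$ and $f\in\operatorname{PSU}(1,1)$, then $h\circ f^{-1}$ is, up to a positive constant, the Poisson kernel based at $f(p)$ — this is standard and follows from $1-|f(z)|^2 = |f'(z)|(1-|z|^2)$ type identities, or just from the fact that $\operatorname{PSU}(1,1)$ permutes Poisson kernels transitively on $S^1$.

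The main obstacle I anticipate is handling the leafwise-constant ambiguity in $h$ and the passage from ``a.e.\ $t$'' to ``every $t$'' rigorously. The function $h$ in \eqref{eq:harmstr} is only pinned down by the normalization $h(0,t)=1$ a.e., and \eqref{eq:harmonicmeas} was derived only for a.e.\ $t$; so one must check that the formula for $m(e^{it})$ extends continuously (indeed, is forced by continuity) to all $t$, and that the extension is compatible with the group action for all $\gamma$ simultaneously. The cleanest route, which I would take, is: (i) note $\varphi(z_0,\cdot)$ is a bona fide homeomorphism of $\R$ for the chosen $z_0$ since $\mu'_{z_0}$ is non-atomic of full support; (ii) define $m$ on all of $S^1$ by the explicit Möbius formula in $\varphi(z_0,t)$; (iii) verify this $m$ is continuous and orientation-preserving directly from that formula; (iv) check it agrees a.e.\ with the blow-up-point description and hence, both sides being determined by the measure $\mu'$ which is genuinely defined, the blow-up description holds everywhere; (v) run the equivariance argument using the everywhere-defined blow-up description, where the leafwise constant is irrelevant because blow-up points are insensitive to multiplication by positive constants. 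A secondary point to be careful about is that ``$m$ conjugates $\rho'$ to $\rho_0$'' requires $\rho'(\gamma)$ to be genuinely conjugate (not merely semiconjugate) to $\rho_0(\gamma)$, which is immediate once $m$ is known to be a homeomorphism and equivariant. Finally, since $\psi\colon S^1\to S^1$ from Step 1 is a degree-one monotone map with $\psi\circ\rho(\gamma)=\rho'(\gamma)\circ\psi$, the composite $m\circ\psi$ is a $(\rho,\rho_0)$-equivariant degree-one monotone map, giving the semiconjugacy claimed in Corollary \ref{cor:mat} and the map $\m = m\circ\psi$ of Theorem \ref{thm:Poi}; I would close by identifying $\m$ with $m\circ\psi$ and rewriting \eqref{eq:harmonicmeas} pulled back along $\psi^{-1}$, together with the change-of-variables $\nu = \psi_*(\text{original transverse measure})$, to land exactly on the stated form $\tfrac{1-|z|^2}{|\m(e^{it})-z|^2}\vol(z)\,\nu(t)$.
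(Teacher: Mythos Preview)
Your proposal is correct and follows essentially the same approach as the paper: both arguments deduce that $m$ is an orientation-preserving homeomorphism from the fact that $\varphi(z_0,\cdot)$ is (via Lemma~\ref{lem:bilipschitz}) and then obtain equivariance from the $\Gamma$-invariance of $\tilde\mu'$ together with the characterization of $m(e^{it})$ as the unique blow-up point of the Poisson kernel $h(\cdot,t)$. Your treatment is more explicit about upgrading the a.e.\ identity \eqref{eq:harmonicmeas} to an everywhere-defined $m$ and about the role of the leafwise multiplicative constant, but the paper handles these points the same way, just more tersely.
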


\begin{proof}
By Lemma \ref{lem:bilipschitz}, $\varphi$ is an orientation-preserving homeomorphism, and hence so is $m$. 

Let us prove the equivariance of $m$, which follows from the invariance of $\tilde{\mu}'$ under the $\G$-action on $\D \times \R$ as in the sequel: Since the harmonic measure $\widetilde{\mu}' = h(z,t)\vol(z)\lambda(t)$ on $\D \times \R$ is the pull-back of a measure on $M'$, 
we have, for every $\gamma \in \Gamma$, $\gamma_*\widetilde{\mu}' = \widetilde{\mu}'$. It follows that 
\begin{equation}\label{eq:equiv}
h(\rho_0(\gamma)z, \tilde\rho'(\gamma)(t)) \vol(z) \rho'(\gamma)_*\lambda(t) = h(z,t)\vol(z)\lambda(t).
\end{equation}
Then, for a.e.\ $t$, $h(\rho_0(\gamma)z, \tilde\rho'(\gamma)(t))$ and $h(z,t)$ differ only up to a constant which may depend on $t$. As $z \to m(e^{it})$, the right hand side of \eqref{eq:equiv} diverges to $+\infty$, and hence so does the left hand side. Since $m(e^{it})$ is the unique point in $S^1$ such that $h(z,t) \to +\infty$ as $z \to m(e^{it})$ for a.e.\ $t$ by \eqref{eq:harmonicmeas}, it follows that $\rho_0(\gamma) \circ m = m \circ \rho'(\gamma)$.
\end{proof}

Now consider $\mathfrak{m} := m \circ \psi \colon S^1 \to S^1$. 
The map $\mathfrak{m}$ is continuous, monotone, of degree one and $(\rho, \rho_0)$-equivariant from Claims \ref{newholonomy} and \ref{claim:equiv}  
giving a semiconjugacy from $\rho$ to $\rho_0$.
The equation \eqref{eq:harmonicmeas} completely describes the leafwise harmonic function associated with $\mu'$ on $M \times_{\rho'} S^1$ and $\Psi$.
From $\mu'_z = \psi_* \mu_z$, it follows that
\begin{align*}
\mu_z (\psi^{-1}(B)) & = \int_B \frac{1-\lvert z \rvert^2}{\lvert m(e^{it})-z \rvert^2} \lambda(dt)\\
& = \int_B \frac{1-\lvert z \rvert^2}{\lvert m(e^{it})-z \rvert^2} (\psi_* \mu_0)(dt)\\
&= \int_{\psi^{-1}(B)} \frac{1-\lvert z \rvert^2}{\lvert \m(e^{it})-z \rvert^2} q(0,t)\nu(dt)
\end{align*}
for any Borel set $B$ on $S^1$.
Since, for any interval $I \subset \R$,  $\tilde{\psi}^{-1}(\tilde{\psi}(I)) \setminus I$ is $\nu$-null set, we yield
\[
\mu_z (B) = \int_{B} \frac{1-\lvert z \rvert^2}{\lvert \m(e^{it})-z \rvert^2} q(0,t)\nu(dt)
\]
for any open set, hence, any Borel set $B$ on $S^1$. 
We therefore obtained the description of 
the leafwise harmonic function associated with $\mu$ on $M \times_{\rho} S^1$,
\[
q(z,t) =  q(0,t)\frac{1-\lvert z \rvert^2}{\lvert \m(e^{it})-z \rvert^2}
\]
for $\nu$-a.e.\ $t$.
Replacing $\nu$ with $q(0,t) \nu$, we complete the proof of Theorem \ref{thm:Poi}.

\end{document}